\documentclass[12pt]{amsart}
\usepackage{a4wide}
\usepackage{amssymb}
\usepackage{mathrsfs}
\usepackage{graphicx}
\usepackage{comment}
\usepackage{longtable}
\usepackage{hyperref}
\usepackage{color}
\usepackage{float}

\usepackage{adjustbox}
\usepackage{array}
\newcolumntype{R}[2]{%
    >{\adjustbox{angle=#1,lap=\width-(#2)}\bgroup}%
    l%
    <{\egroup}%
}
\newcommand*\rot[2]{\multicolumn{1}{R{#1}{#2}}} 

\numberwithin{equation}{section}
\def\th{\theta}
\newcommand{\comments}[1]{}

\newcommand{\be}{\begin{equation}}
\newcommand{\ee}{\end{equation}}
\newcommand{\ba}{\begin{align}}
\newcommand{\ea}{\end{align}}

\DeclareMathOperator{\Res}{Res}

\newtheorem{theorem}{Theorem}
\newtheorem{prop}{Proposition}[section]
\newtheorem{corollary}[prop]{Corollary}
\newtheorem{lemma}[prop]{Lemma}

\newtheorem{remark}[prop]{Remark}

{\begin{list}{}{%
\settowidth{\labelwidth}{\textsf{{\it #1.}}}%
\setlength{\labelsep}{4mm}%
\setlength{\leftmargin}{\labelwidth}%
\addtolength{\leftmargin}{\labelsep}%
}}%
{\end{list}}
\def\cc{,\dots,}

\def\eps{\varepsilon}
\def\la{\lambda}\def\th{\theta}

\def\lcm{\operatorname{lcm}}

{\begin{list}{}{%
\settowidth{\labelwidth}{\textsf{{\it #1.}}}%
\setlength{\labelsep}{2mm}%
\setlength{\leftmargin}{\labelwidth}%
\addtolength{\leftmargin}{\labelsep}%
\addtolength{\leftmargin}{4mm}%
\setlength{\itemsep}{6pt}%
\setlength{\listparindent}{0pt}%
\setlength{\topsep}{3pt}%
}}%

\title{Short Salem polynomials}
\author{James McKee}
\address{Royal Holloway, University of London, Egham Hill, Egham, Surrey TW20 0EX, UK}
\email{james.mckee@rhul.ac.uk}

\author{Chris Smyth} 
\address{University of Edinburgh, Edinburgh EH9 3FD, Scotland, UK}
\email{c.smyth@ed.ac.uk}

\subjclass[2020]{11R06}

\begin{document}

\begin{abstract}
   We give a complete classification of all Salem polynomials of length $5$.
   For length $6$ we show that all but finitely many Salem polynomials lie in one of $12$ infinite families, and subject to Lehmer's Conjecture we give a complete list of the 126 exceptions.
   We provide a table of short polynomials for all known Salem numbers below the smallest Pisot number.
\end{abstract}

\maketitle

\section{Introduction and statement of results}

A \emph{Salem number} is a real algebraic integer $\tau>1$ such that all of the Galois conjugates of $\tau$ (excluding $\tau$ itself) have modulus at most $1$, and with at least one having modulus exactly $1$. 
For more about these numbers see the surveys \cite{GhateHironaka2001} and \cite{Smyth2015}.

For a polynomial $P(z)\in\mathbb{Z}[z]$ that has degree $d$, we define $P^*(z)=z^d P(1/z)$; its coefficients are those of $P$ written in reverse order. 
We speak of $P^*$ as being the \emph{reciprocal} of $P$ (its zeros are the reciprocals of the nonzero zeros of $P$).
We call $P$ \emph{reciprocal} (respectively, \emph{antireciprocal}) if $P^*=P$ (respectively, $P^*=-P$).
The \emph{length} of a polynomial is the sum of the absolute values of its coefficients.
We use $\Phi_d(z)$ for the $d$-th cyclotomic polynomial; this is the minimal polynomial of $\omega_d:=e^{2\pi i/d}$.
It will be convenient to use the term \emph{cyclotomic polynomial} more generally to mean any product of irreducible cyclotomic polynomials $\Phi_d$, allowing repeated factors, and allowing the empty product $1$.

The minimal polynomial of a Salem number $\tau$ is reciprocal, of even degree at least $4$, and all but two of its zeros lie on the unit circle (the exceptions being $\tau$ and $1/\tau$).
A {\it Salem polynomial} is defined to be the minimal polynomial of a Salem number, possibly (but not necessarily) multiplied by a cyclotomic polynomial.
The smallest known Salem number is Lehmer's number, $\lambda_0 = 1.1762808\dots$, a zero of $z^{12}-z^7-z^6-z^5+1$.
This length-5 polynomial is not the minimal polynomial of $\lambda_0$, but is a multiple of its minimal polynomial by the cyclotomic polynomial $z^2-z+1$.
It is the shortest possible multiple of the minimal polynomial of $\lambda_0$ by a cyclotomic polynomial, and we say that $\lambda_0$ has \emph{shortness} $5$, with $z^{12}-z^7-z^6-z^5+1$ being a \emph{short polynomial} for $\lambda_0$ (there is one other short polynomial for $\lambda_0$, namely $z^{14}-z^{11}-z^7-z^3+1$).

\subsection{Salem polynomials of shortness 5}
It is easy to see that if a monic integer polynomial is reciprocal and has length at most $4$, then it is cyclotomic.
Thus the shortness of a Salem number is at least $5$ (with $\lambda_0$ illustrating that $5$ is possible).

In this paper we give a complete list of all shortness-$5$ Salem numbers (Theorem \ref{T-shortness 5}).

\begin{theorem}\label{T-shortness 5} There are precisely $17$ Salem polynomials of length $5$, associated to $13$ distinct Salem numbers of shortness $5$. 
These polynomials and Salem numbers are given in Table \ref{T-1}.
\end{theorem}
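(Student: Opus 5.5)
We first reduce to a small number of coefficient shapes. A Salem polynomial $P$ is reciprocal; we may assume $P(0)\neq 0$, since a factor $z$ is neither Salem nor cyclotomic. Monicity and reciprocity then force the leading and constant coefficients to be $1$. The remaining length $3$ must be distributed over the middle coefficients in a way invariant under $k\mapsto d-k$. If the middle coefficients are $\pm1$ at positions $a, d-a$ and $\pm1$ at $d/2$, we get $P=z^{2n}+\epsilon z^{n+m}+\eta z^n+\epsilon z^{n-m}+1$. A coefficient $\pm 3$ or $\pm2,\pm1$ cannot occur off-centre; a central coefficient $\pm3$ gives $z^{2n}\pm3z^n+1$. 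Splitting $\pm2$ off-centre would need length $4$ plus the centre, so the only other candidates are $z^{2n}+\epsilon z^{n+m}+\epsilon z^{n-m}+\eta z^n\cdots$, already covered.

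So the possibilities are:
\begin{itemize}
\item[(i)] $z^{2n}\pm 3z^n+1$;
\item[(ii)] the pentanomial $Q(z)=z^{2n}+\epsilon z^{n+m}+\eta z^n+\epsilon z^{n-m}+1$ with $0<m<n$ and $\epsilon,\eta\in\{\pm1\}$.
\end{itemize}
Case (i) is a polynomial in $z^n$ whose roots $w=z^n$ are real with $|w|\neq1$. So all roots of $P$ are off the circle, and for $n\ge2$ there are more than two; this is never Salem. We may also assume $\gcd(m,n)=1$, since otherwise $P(z)=R(z^g)$, and a polynomial in $z^g$ with $g\ge2$ has at least $2g>2$ roots off the circle unless it is cyclotomic. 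The substitution $z\mapsto -z$ changes signs, so we track each case separately rather than identify them.

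Next we write $z^{-n}Q(z)$ on the unit circle $z=e^{i\theta}$ as the real trigonometric polynomial
\[ f(\theta)=2\cos n\theta+2\epsilon\cos m\theta+\eta. \]
Counting roots on the circle is then counting sign changes of $f$ on $[0,2\pi)$, because a reciprocal polynomial with real coefficients has its unimodular roots at the zeros of $f$. $P$ is Salem (times cyclotomic) exactly when it has exactly $2n-2$ roots on the circle, counted with multiplicity, i.e.\ $f$ has $2n-2$ zeros in $[0,2\pi)$ instead of the maximal $2n$. It is also necessary that $P$ is not cyclotomic, which is checked by $P(\tau)=0$ for some real $\tau>1$, for instance by $P(1)<0$ or a sign argument. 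The plan is to show that for $n$ large relative to $m$, the term $2\cos n\theta$ oscillates fast while $2\epsilon\cos m\theta+\eta$ varies slowly. Then $f$ has exactly two zeros in each period of $\cos n\theta$ wherever $|2\epsilon\cos m\theta+\eta|<2$, and it loses zeros only near the $\theta$ where $|2\epsilon\cos m\theta+\eta|$ exceeds $2$. Since $|2\epsilon\cos m\theta+\eta|\le 3$, the region where it exceeds $2$ is a union of $m$ (or $2m$) short arcs around the points where $\epsilon\cos m\theta=\eta$. Each such arc of length about $c/m$ contains about $cn/(m\pi)$ periods of $\cos n\theta$, and each period in it loses two zeros. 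Hence once $n/m$ exceeds a computable constant, at least four zeros are lost, giving at least two roots $\tau$ with $|\tau|>1$, so $P$ is not Salem.

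This leaves finitely many ratios $n/m$ to examine. For each we still need a bound on $n$ itself. We would handle this by a more careful local analysis near each bad arc: each arc contains either zero or at least one full lost pair, so the number of lost pairs is roughly $m\cdot\lfloor cn/m\rfloor$, which is $\ge 2$ unless $m=1$ or $m$ is tiny. With $m$ forced small (likely $m\le 3$ or so), the problem reduces to trinomial-like families $z^{2n}\pm z^{n+1}\pm z^n\pm z^{n-1}+1$, and in these the bad arc shrinks while the oscillation rate stays tied to $n$. We then show directly, via Rouché on the circle or via the factorisation $z^{n-1}(z^2\pm\cdots)$-style identities, that $n$ beyond some small bound (around $n\le 10$, so degree $\le 20$) gives too many roots outside. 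The main obstacle is making this lost-zero counting rigorous and uniform, especially near the boundary points where $|2\epsilon\cos m\theta+\eta|=2$ exactly. These correspond to tangencies and possible cyclotomic factors, and there a zero may be lost or merely doubled. We would resolve this by an explicit intermediate value count on intervals $[k\pi/n,(k+1)\pi/n]$ with explicit error bounds.

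Finally, with $n$ and $m$ bounded, we enumerate all remaining $(n,m,\epsilon,\eta)$ by computer. For each we factor $P$, remove cyclotomic factors, and test whether the remainder is a Salem minimal polynomial, i.e.\ has exactly one root outside the circle. This yields the 17 polynomials. Grouping them by their unique root $\tau>1$ gives the 13 Salem numbers of Table~\ref{T-1}; several, like $\lambda_0$ with its two short polynomials, arise from more than one pentanomial.
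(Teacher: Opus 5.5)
There is a genuine gap, and it sits where the real difficulty of the problem lies. Your zero-counting treats $2\epsilon\cos m\theta+\eta$ as slowly varying relative to $2\cos n\theta$, which only makes sense for $m\ll n$.

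The complementary range $n/m\le C$ is not ``finitely many ratios'': it contains infinitely many coprime pairs, e.g.\ $(m+1,m)$ for every $m$. Nor is $m$ then forced to be small. Table~\ref{T-1} contains $z^{20}-z^{19}-z^{10}-z+1$, $z^{18}-z^{17}-z^{9}-z+1$ and $z^{16}-z^{15}-z^{8}-z+1$, i.e.\ $(n,m)=(10,9),(9,8),(8,7)$. Your own estimate $m\lfloor cn/m\rfloor$ of lost pairs is $0$ as soon as $n<m/c$, so it says nothing in this range.

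When $m$ is comparable to $n$, $f$ is better seen as ``fast times slow'' rather than ``fast plus slow''. First use that a Salem polynomial of odd length has $P(1)<0$, which forces $\epsilon=\eta=-1$. Then
\[
f(\theta)+1=2\cos n\theta-2\cos m\theta=-4\sin\tfrac{(n+m)\theta}{2}\,\sin\tfrac{(n-m)\theta}{2}.
\]
The missing idea is the paper's B\'ezout argument (Lemma~\ref{L-ext}). Since $\gcd(m,n)=1$, the zero sets of the two sine factors contain two points only of order $1/((n+m)(n-m))$ apart. On the hump between them (after a small adjustment to land on a positive hump), the product has modulus below $1$ once $n$ exceeds a small absolute constant, uniformly in $m$; the case $m=n-1$ needs a slight variant. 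This is what gives $n\le 13$ and hence a finite search. Without it you have no bound on $n$ at all, so the final computer enumeration is not justified.

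Second, the mechanism you propose for rigour points the wrong way. Intermediate-value counts on the intervals $[k\pi/n,(k+1)\pi/n]$ give \emph{lower} bounds for the number of unimodular roots. To exclude a polynomial you need an \emph{upper} bound, namely fewer than $2n-2$ zeros of $f$ on $[0,2\pi)$.

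The paper obtains this from Lemma~\ref{L-2roots}. For a product of two sines, the zeros of the derivative interlace the zeros of the function. So each hump between consecutive zeros of $f+1$ contains at most two solutions of $f=0$, and none if its maximum is below $1$. Since there are exactly $n-1$ positive humps, one deficient hump already rules out Salem. Your fast/slow picture supplies no such upper-bound mechanism, and the tangency points you flag are exactly where it loses control.

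A minor point: a Salem polynomial can be antireciprocal (an odd power of $z-1$), so ``a Salem polynomial is reciprocal'' is false as stated. For odd length this is excluded because $P(1)$ is odd, but it needs saying.
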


The polynomials and numbers in Table \ref{T-1} match exactly the relevant polynomials and numbers in Table 1 of \cite{El-SerafyMcKee2025}.
The table in \cite{El-SerafyMcKee2025} lists all monic reciprocal primitive integer polynomials (up to change of sign of the variable) of length $5$ that have a zero of absolute value at least Lehmer's number, $\lambda_0$. 
This includes all Salem polynomials (perhaps with $z$ replaced by $-z$) of length $5$ for which the Salem number is at least Lehmer's number.
Theorem \ref{T-shortness 5} tells us there are no smaller Salem numbers of shortness $5$.

\subsection{Salem polynomials of shortness 6}
For shortness $6$ the situation is more complicated.
There are twelve infinite families of length-$6$ Salem polynomials (all previously known) which alongside Theorem \ref{T-shortness 5} shows that there are infinitely many Salem polynomials (representing infinitely many Salem numbers) that have shortness $6$.
We show that there are finitely many other shortness-$6$ Salem polynomials (Theorem \ref{T-shortness 6}).
We give a complete list for those whose Salem numbers are at least $1.01$. By Dobrowolski's result \cite[Theorem 1]{Dobrowolski2006} there are no Salem polynomials of length 6 whose Salem number is less that $1+10^{-66}$, so extending our search method as close to $1$ as this would give a complete list. However, this is impractical. 
Nevertheless it is plausible that our list is indeed complete. 
Of course, this would be the case if, as is strongly suspected, the conjecture that  $\lambda_0$  is the smallest Salem number is true (a special case of the strong version of `Lehmer's Conjecture' \cite[Section 1.2]{McKeeSmyth2021}).

\begin{theorem}\label{T-shortness 6}
    There are $12$ infinite families of length-6 Salem polynomials, of the shape $P_n(z)=z^nP(z)+\eps P^*(z)$, where $$P(z)\in\{z-2,z^2-z-1,z^3-z-1,z^3-z^2-1,z^4-z^3-1,z^5-z^4-1\}\,,$$ $\eps\in\{-1,1\}$, and $n\ge n_0=n_0(P,\eps)$. 
    For values of $n_0$, see Table \ref{T-Salem families}.
    
    There are finitely many shortness-6 Salem numbers that do not belong to one these infinite families. 
    There are $126$ sporadic length-6 Salem polynomials for which the Salem number is at least $1.01$.
    Of these, 116 correspond to Salem numbers below $\theta_0$, and 10 correspond to Salem numbers above $\theta_0$.

\end{theorem}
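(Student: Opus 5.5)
Reduce to reciprocal polynomials, treat everything that can be written as $z^nP(z)\pm P^*(z)$ by Salem's construction, and bound all remaining configurations so that only finitely many survive for a computer search.

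\emph{Setup.} A length-6 Salem polynomial $F$ is monic and reciprocal, of even degree $d$. Its six unit coefficients come in mirror pairs, so $F=z^d+1+(\text{middle})$ with the middle contributing length 4, or $F$ has a coefficient $\pm2$. After normalising $z\mapsto -z$ and taking $F(0)=1$, I would write $F(z)=z^nP(z)+\eps P^*(z)$ where $P$ collects the "top half" of the monomials. Reciprocity forces the exponents to be symmetric about $d/2$. Hence either all six monomials split into two mirror triples (giving $P$ with three terms, or $z-2$ when a coefficient $2$ occurs), or some terms sit at the central exponent $d/2$. The second case gives the shape $z^d\pm z^a\pm z^{d/2}\pm z^{d-a}+1$ together with degenerate variants. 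I would enumerate these shapes in terms of a few gap parameters $0<a<b<\dots\le d/2$.

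\emph{Infinite families.} For $F=z^nP+\eps P^*$ I apply the classical Salem construction. If $P$ has a single zero outside the unit circle (a Pisot-type polynomial) and no zeros on the circle except at $1$, then by Rouché's theorem on $|z|=1$ (using $|P|=|P^*|$ there) the polynomial $z^nP+\eps P^*$ has exactly one zero outside the circle once $n$ is large. This gives the families with $P$ from the list in the theorem, with $n_0$ found by checking small $n$ directly. I would use a Boyd-style argument to show that every other three-term $P$ (up to sign and reversal) with one root outside the unit disc is one of the six listed. Length 3 together with a root $>1$ forces $P=z^k-z^j-1$, or $z-2$. Among these, the ones with only one root outside the circle are exactly those listed. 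This is a finite check via Cohn or Schur–Cohn type criteria on the trinomials $z^k\pm z^j\pm1$, using the known fact that these are Pisot only for $k\le5$.

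\emph{Finiteness of the sporadic ones.} This is the main obstacle. For shapes not of the form above, for example those with two mirror pairs of parameters growing independently, I would show that the Salem number cannot exist once the gaps are large. As the gaps tend to infinity, the polynomial splits into a "limit" of two separated Pisot-like blocks. Its Mahler measure then converges to the measure of a two-variable polynomial, $F(z)=Q(z,z^m)$ with the limit as $m\to\infty$ equal to $M(Q(x,y))$ by Boyd–Lawton. The counting of roots outside the circle then gives at least two roots outside, contradicting the Salem property for all large parameters except along the one-parameter families already found. Concretely, I would write $F=z^{m}A(z)+B(z)$ with $A,B$ short and run Rouché on circles $|z|=1+\delta$ to count zeros. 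This leaves a finite box of parameters plus the finitely many small-degree exceptions. Shortness 6 also requires excluding the 17 length-5 cases of Theorem~\ref{T-shortness 5}.

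\emph{The count.} Within the finite region I would enumerate all reciprocal length-6 polynomials and test each for the Salem property: compute the number of zeros off the unit circle, checking for exactly one real root $>1$ with the rest on the circle, via Sturm sequences on the trace polynomial. I would remove those lying in the twelve families and those whose Salem number has shortness 5. Sorting by comparison with $\theta_0\approx1.3247$ gives the $116+10$ split. The bound $1.01$ enters because the finiteness box is explicit but huge. I would therefore use the root-location estimate, that a Salem number of length-6 shape with gap $m$ is at least roughly $1+c/m$, to stop the search at degree corresponding to $\tau\ge1.01$.
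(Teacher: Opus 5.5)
There is a genuine gap: the proposal does not prove that the sporadic polynomials are finite. First, your case split is off. A reciprocal or antireciprocal polynomial of length $6$ has even (possibly zero) central coefficient, so it is \emph{always} of the form $z^nP(z)+\varepsilon P^*(z)$ with $P$ of length $3$. Your shape $z^d\pm z^a\pm z^{d/2}\pm z^{d-a}+1$ has length $5$. The sporadic examples are exactly those where $P$ is not Pisot, e.g.\ $P=z^k-z^{k-d}-1$ as in Table~\ref{Ta-Salem6}. So the parameter space is three-dimensional: the two gaps of $P$, and $n$.

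Your Rouch\'e argument on $|z|=1+\delta$ for $F=z^mA+B$ is valid only for fixed $A,B$. Both $\delta$ and the threshold for $m$ depend on how far the zeros of $A$ outside the unit circle are from it. As the gaps of $P$ grow, these zeros accumulate on the circle (think of $z^k-z^{k-1}-1$ as $k\to\infty$). So you get finitely many $n$ for each non-Pisot $P$, but nothing bounds $P$ itself. The Boyd--Lawton remark does not repair this. Convergence of Mahler measures gives no count of zeros outside the circle, and you only let one exponent grow.

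The missing ingredient, which is what the paper uses, is a \emph{uniform} lower bound $\tau\ge a>1$ for Salem numbers of length-$6$ polynomials. This comes from Dobrowolski's bound for the Mahler measure in terms of the number of monomials. Write $F=z^d+\varepsilon_1z^{d-g_1}+\varepsilon_2z^{d-g_1-g_2}+\cdots$. Then $F(\tau)=0$ gives $\tau^{g_1}\le 5$ and $\tau^{g_2}|\tau^{g_1}+\varepsilon_1|\le 4$, so $g_1$ and $g_2$ are bounded in terms of $a$. This leaves finitely many heads $Q=z^{g_1+g_2}+\varepsilon_1z^{g_2}+\varepsilon_2$. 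For each, either $Q$ has at least two zeros outside the circle and Rouch\'e around them bounds the last gap, or it has exactly one and is a Pisot polynomial from Lemma~\ref{L-trinomialPisot}.

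One could try to replace Dobrowolski by Boyd--Lawton. That would mean comparing $M(F)=\tau$, which tends to $1$ as $g_1$ or $g_2$ grows, with Mahler measures $>1$ of the multivariable limits along every degenerate lattice direction. But that essentially reproves the Dobrowolski--Lawton--Schinzel theorem, and it is not what you sketched.

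Three smaller points:
\begin{enumerate}
\item The cutoff $\tau\ge1.01$ is imposed via the \emph{upper} bound $\tau\le5^{1/g_1}$ (and its analogue for $g_2$), not a lower bound of the form $1+c/m$.
\item The count of $126$ includes polynomials whose Salem numbers have shortness $5$ (thirteen for Lehmer's number alone), so discarding those would not reproduce it.
\item The claim ``$P_n$ is Salem iff $n\ge n_0$'' needs an argument that no quadratic Pisot factor appears for large $n$. The paper obtains this from the cyclotomic-factor analysis of Section~\ref{S:smallest Pisot limit}, not from checking small $n$.
\end{enumerate}
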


\subsection{Cyclotomic factors of polynomials in Salem's families}
Recall that a \emph{Pisot number} is a real algebraic integer $\theta>1$ such that all of the Galois conjugates of $\theta$ (excluding $\theta$ itself) have modulus strictly less than one.
We define a {\it Pisot polynomial} to be  the minimal polynomial of a Pisot number, possibly multiplied by a cyclotomic polynomial.
The smallest Pisot number is $\theta_0 = 1.3247179\dots$, a zero of $z^3-z-1$.

Let $P(z)$ be a Pisot polynomial that is not equal to its reciprocal polynomial.
Salem considered the families of polynomials $z^nP(z) \pm P^*(z)$, to produce sequences of Salem numbers converging to the Pisot zero of $P(z)$, from either side.
We recall a more precise statement of this in Proposition \ref{T-Salemupdated}, adding some contributions of Boyd.
We extract part of that Proposition here for ease of reference, as a technical theorem.

\begin{theorem}\label{T3}
    Let $P(z)$ be an irreducible Pisot polynomial that is not equal to its reciprocal polynomial. 
    Take $\varepsilon=\pm 1$, and define, for $n\in\mathbb{N}$, $P_n(z)=z^nP(z) + \varepsilon P^*(z)$.
    Then 
    \begin{enumerate}
        \item[(i)] there is a finite set of values of $d$ (depending on $P$) for which there exists $n$ such that $\Phi_d(z) \mid P_n(z)$;
        \item[(ii)] if for some integer $n_d$ we have $\Phi_d(z) \mid P_{n_d}(z)$, then $\Phi_d(z) \mid P_n(z)$ if and only if $n\equiv n_d\pmod{d}$.
    \end{enumerate}
\end{theorem}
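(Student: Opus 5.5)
The plan is to work with a primitive $d$-th root of unity $\omega=\omega_d$. Since $P_n(z)\in\mathbb{Z}[z]$ and $\Phi_d$ is irreducible, $\Phi_d\mid P_n$ if and only if $P_n(\omega)=0$. Put $k=\deg P$. Because $P$ is the minimal polynomial of a Pisot number, it is not cyclotomic, so $P(\omega)\neq 0$. Hence $P_n(\omega)=0$ is equivalent to
\[
\omega^n=-\varepsilon\,\frac{P^*(\omega)}{P(\omega)} .
\]

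Part (ii) follows directly from this. The right-hand side depends only on $d$, $P$ and $\varepsilon$, not on $n$. If $\Phi_d\mid P_{n_d}$, then $\Phi_d\mid P_n$ holds if and only if $\omega^n=\omega^{n_d}$. Since $\omega$ has exact order $d$, this is equivalent to $n\equiv n_d\pmod d$.

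For part (i) we need to bound $d$ in terms of $P$ alone. Write $P(z)=\sum_{i=0}^k a_iz^i$. The relation $P_n(\omega)=0$ reads
\[
\sum_{i} a_i\,\omega^{n+i}+\varepsilon\sum_{j}a_j\,\omega^{k-j}=0 .
\]
Expanding each $a_i\omega^{e}$ as $|a_i|$ copies of $\pm\omega^e$, this is a vanishing sum of at most $N=2L$ roots of unity, where $L$ is the length of $P$. We split it into minimal vanishing subsums. By Mann's theorem (or Conway--Jones), in any minimal vanishing subsum the ratio of any two terms is a root of unity of order dividing $M$, the product of the primes up to $N$.

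There are two cases.
\begin{itemize}
\item[(a)] Suppose some minimal subsum contains two terms whose exponents $e\neq e'$ (mod $d$) come from the same block: both of the form $n+i$, or both of the form $k-j$. Then $c=e-e'$ satisfies $0<|c|\le k$ and $\omega^{c}$ has order dividing $M$. Since the order of $\omega^c$ is $d/\gcd(d,c)$, this gives $d\mid Mc$, so $d\le Mk$.
\item[(b)] Otherwise every minimal subsum uses at most one distinct exponent from each block. Each minimal subsum is then essentially a cancellation $\omega^{n+i}=\pm\omega^{k-j}$. The monomials of $z^nP$ are therefore matched with those of $\varepsilon P^*$, with $\omega^{n+i+j-k}=\pm1$ for each matched pair $(i,j)$. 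Take two matched pairs with $i+j\neq i'+j'$. Then $\omega^{2((i+j)-(i'+j'))}=1$ with $0<|(i+j)-(i'+j')|\le 2k$, so $d\le 4k$.
\end{itemize}

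It remains to treat the degenerate situation in case (b) where $i+j$ is constant over all matched pairs, say $i+j=c$. Then the matching says $a_j=\pm a_{c-j}$ with a single sign, so $z^nP(z)$ and $\mp\varepsilon P^*(z)$ agree up to a monomial shift. Comparing degrees, $P^*=\pm P$. This is excluded: $P\neq P^*$ by hypothesis, and $P^*=-P$ forces $P(1)=0$, impossible for an irreducible Pisot polynomial. So in all cases $d\le \max(Mk,4k)$, which proves (i).

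I expect the main obstacle to be the bookkeeping in case (b). Terms from the two blocks may share an exponent and merge, coefficients may partially cancel, and the "matching" must be set up on the merged sum so that the degenerate case genuinely forces $P^*=\pm P$. If this becomes messy, I would fall back on applying Mann's theorem with a coarser partition, treating same-exponent terms as a single coefficient from the start.
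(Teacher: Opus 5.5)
\paragraph{Verdict.} Your proof is correct. Part (ii) is essentially the paper's argument, but part (i) takes a genuinely different route.

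\paragraph{Part (ii).} The paper observes that $\Phi_d$ divides $P_n-P_{n_d}=P(z)(z^n-z^{n_d})$. You observe that $\omega^n=-\varepsilon P^*(\omega)/P(\omega)$ does not depend on $n$. These amount to the same thing.

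\paragraph{Part (i): how the two routes differ.} The paper uses Lemma~\ref{L-Smythcyclotomictrick}: if $\Phi_d\mid P_n$, then $\Phi_d$ also divides $P_n(-z)$, $P_n(z^2)$ or $P_n(-z^2)$. Eliminating $z^n$ then gives one of a few fixed polynomials, such as $Q(z)P(-z)+(-1)^{n+1}Q(-z)P(z)$ or $Q(z)^2P(z^2)+\varepsilon Q(z^2)P(z)^2$ with $Q=P^*$. These depend on $n$ only through its parity. They are not identically zero because their value at the Pisot number $\theta$ is nonzero, so their cyclotomic factors supply the finite set. That argument is elementary and effective: it is exactly the resultant computation of Section~\ref{S:smallest Pisot limit}.

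You instead apply Mann's theorem to the vanishing sum of $2L$ signed roots of unity. This imports a deeper external result and gives an explicit bound $d\le\max(kM,4k)$ that is far too large to be useful computationally. On the other hand, for (i) it uses only $P^*\neq\pm P$ and never the Pisot number. So your argument proves (i) for every integer polynomial that is neither reciprocal nor antireciprocal, whereas the paper's argument relies on evaluating at $\theta$.

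\paragraph{Two points to tidy.}
\begin{itemize}
\item Your reduction in case (b) to pairs $\omega^{n+i}=\pm\omega^{k-j}$ tacitly assumes that distinct exponents within one block are distinct modulo $d$. That holds once $d>k$, and $d\le k$ already lies within your bound, so state this at the outset.
\item The bookkeeping you worry about disappears if you never merge terms.
  \begin{itemize}
  \item Keep the $2L$ signed roots of unity as a multiset and partition it into minimal vanishing sub-multisets.
  \item Combining equal roots inside a minimal sub-multiset keeps it minimal in Mann's sense, since a vanishing proper subsum of the merged form would give a vanishing proper sub-multiset. So Mann applies with at most $2L$ terms.
  \item Because $M$ is even, the sign in the ratio $\pm\omega^{e-e'}$ is harmless.
  \item Inside a block, a fixed exponent carries a single sign. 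Hence in case (b) every minimal sub-multiset is exactly one term from each block, which gives a bijection between the index multisets.
  \item The condition $a_0a_k\neq0$ forces the constant $c=k$. The pair relation then gives $a_{k-i}=-\varepsilon\omega^n a_i$ for all $i$ with $\omega^n=\pm1$, that is, $P^*=\pm P$, exactly as you need.
  \end{itemize}
\end{itemize}
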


The finite set of the theorem can be computed, and we do so for the families that appear in Theorem \ref{T-shortness 6}.
Part (i) follows from \cite[Lemma 2.2]{Chinburg1984}, but we give a simpler proof.

\subsection{Salem numbers below \texorpdfstring{$\theta_0$}{theta\_0}}
We produce a table (Table \ref{Ta-AllSalem}) of all known Salem numbers below the smallest Pisot number, $\theta_{0}$.
Rather than presenting minimal polynomials for these numbers, we give one of their short polynomials. 
From these, their minimal polynomials are easily found using  Corollary  \ref{C-cycfactor} below.

\section{Preliminary results needed for our main proofs}\label{S-prelims}

\begin{table}
\begin{center}
\caption{ The Salem polynomials $S(z)$ of shortness $5$ with corresponding  Salem numbers }\label{T-1} 
\begin{tabular}{|c|c|l|}
\hline
 $S(z)$    &     Salem no.        &             \qquad\qquad    $S(z)$ factorised\\ \hline
$z^{12}-z^7-z^6-z^5+1$ & \vphantom{$3^{3^3}$}1.176280818 & $(z^{10}+z^9-z^7-z^6-z^5-z^4-z^3+z+1)\Phi_6(z)$\\ \hline
$z^{14}-z^{11}-z^7-z^3+1$ & \vphantom{$3^{3^3}$}1.176280818 & $(z^{10}+z^9-z^7-z^6-z^5-z^4-z^3+z+1)\Phi_{10}(z)$\\ \hline
$z^{20}-z^{19}-z^{10}-z+1$ & \vphantom{$3^{3^3}$}1.200026524 & $(z^{14}-z^{11}-z^{10}+z^7-z^4-z^3+1)\Phi_6(z)\Phi_{12}(z)$ \\ \hline
$z^{14}-z^{12}-z^7-z^2+1$ & \vphantom{$3^{3^3}$}1.202616744 & $z^{14}-z^{12}-z^7-z^2+1$\\ \hline
$z^{10}-z^6-z^5-z^4+1$ & \vphantom{$3^{3^3}$}1.216391661 & $z^{10}-z^6-z^5-z^4+1$\\ \hline
 $z^{18}-z^{17}-z^9-z+1$ & \vphantom{$3^{3^3}$}1.216391661 & $(z^{10}-z^6-z^5-z^4+1)\Phi_6(z)\Phi_{12}(z)$\\ \hline
$z^{10}-z^7-z^5-z^3+1$ & \vphantom{$3^{3^3}$}1.230391434 & $z^{10}-z^7-z^5-z^3+1$\\ \hline
$z^{16}-z^{15}-z^8-z+1$ & \vphantom{$3^{3^3}$}1.236317932 & $z^{16}-z^{15}-z^8-z+1$\\ \hline
 $z^{10}-z^8-z^5-z^2+1$ & \vphantom{$3^{3^3}$}1.261230961 & $z^{10}-z^8-z^5-z^2+1$\\ \hline
$z^{14}-z^{13}-z^7-z+1$ & \vphantom{$3^{3^3}$}1.261230961 & $(z^{10}-z^8-z^5-z^2+1)\Phi_{10}(z)$\\ \hline
 $z^8-z^5-z^4-z^3+1$ & \vphantom{$3^{3^3}$}1.280638156 & $z^8-z^5-z^4-z^3+1$\\ \hline
$z^{12}-z^{11}-z^6-z+1$ & \vphantom{$3^{3^3}$}1.293485953 & $(z^{10}-z^8-z^7+z^5-z^3-z^2+1)\Phi_6(z)$\\ \hline
 $z^{10}-z^9-z^5-z+1$ & \vphantom{$3^{3^3}$}1.337313210 & $z^{10}-z^9-z^5-z+1$\\ \hline
  $z^6-z^4-z^3-z^2+1$ & \vphantom{$3^{3^3}$}1.401268368 & $z^6-z^4-z^3-z^2+1$\\ \hline
 $z^8-z^7-z^4-z+1$ & \vphantom{$3^{3^3}$}1.401268368 & $(z^6-z^4-z^3-z^2+1)\Phi_6(z)$\\ \hline
 $z^6-z^5-z^3-z+1$ & \vphantom{$3^{3^3}$}1.506135680 & $z^6-z^5-z^3-z+1$\\ \hline
 $z^4-z^3-z^2-z+1$ & \vphantom{$3^{3^3}$}1.722083806 & $z^4-z^3-z^2-z+1$\\

\hline
\end{tabular}

\end{center}

\end{table}

For the proof of Theorem \ref{T-shortness 5}, we need the following.

\begin{lemma}\label{L-2roots}
 Let $a$ and $b$ be fixed positive numbers, and $f(\th):=\sin(a\th)\sin(b\th)$. Then on $[0,2\pi]$ the distinct zeros of $f(\th)$ interlace the zeros of $f'(\th)$.
 \end{lemma}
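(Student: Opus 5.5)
The plan is to study the logarithmic derivative of $f$ on each open interval where $f$ has no zeros. Let $Z$ be the (finite) set of zeros of $f$ in $[0,2\pi]$: the points where $a\theta\in\pi\mathbb{Z}$ or $b\theta\in\pi\mathbb{Z}$. Let $\theta_1<\theta_2$ be consecutive points of $Z$. On $(\theta_1,\theta_2)$ neither sine vanishes, so we may write
\[
\frac{f'(\theta)}{f(\theta)} = a\cot(a\theta)+b\cot(b\theta).
\]
Differentiating gives
\[
\Bigl(\frac{f'}{f}\Bigr)' = -\frac{a^2}{\sin^2(a\theta)}-\frac{b^2}{\sin^2(b\theta)}<0,
\]
so $f'/f$ is strictly decreasing on $(\theta_1,\theta_2)$. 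Hence $f'$ has at most one zero there.

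Next I will check the boundary behaviour to get existence. As $\theta\to\theta_1^+$, at least one of $\sin(a\theta)$, $\sin(b\theta)$ tends to $0$. The corresponding cotangent term tends to $+\infty$, while the other term either stays bounded or also tends to $+\infty$. So $f'/f\to+\infty$, and symmetrically $f'/f\to-\infty$ as $\theta\to\theta_2^-$. By the intermediate value theorem, $f'$ has exactly one zero in $(\theta_1,\theta_2)$. (Equivalently, this is Rolle's theorem for existence plus monotonicity for uniqueness.)

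It remains to handle the zeros of $f'$ that lie in $Z$ itself. I will factor $f'(\theta)=a\cos(a\theta)\sin(b\theta)+b\sin(a\theta)\cos(b\theta)$. At a point of $Z$ where exactly one sine vanishes, the zero of $f$ is simple and $f'\neq0$. At a point where both sines vanish (for example $\theta=0$), $f$ has a double zero and $f'=0$. These multiple zeros are exactly the points where $f$ and $f'$ share a zero, which is the degenerate case that ``interlacing of distinct zeros'' is meant to allow. Combining the three steps: strictly between any two consecutive distinct zeros of $f$ there lies exactly one zero of $f'$, and the only other zeros of $f'$ are at the multiple zeros of $f$.

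The main obstacle is the bookkeeping at the ends of $[0,2\pi]$. The point $\theta=0$ is a double zero of $f$, but $2\pi$ need not be a zero of $f$ unless $a$ or $b$ is a half-integer. If it is not, the final segment from the last zero of $f$ to $2\pi$ is not bounded by two zeros of $f$, and it may contain a zero of $f'$. There $f'/f$ is still strictly decreasing from $+\infty$, so it has at most one zero. I would state the lemma precisely in this form: between consecutive distinct zeros exactly one zero of $f'$, and at most one in the terminal segment. I would also check that this is the form used later in the proof of Theorem \ref{T-shortness 5}.
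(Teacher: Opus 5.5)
Your proposal is correct and follows essentially the same route as the paper: both use that $f'/f=a\cot(a\theta)+b\cot(b\theta)$ is strictly decreasing between its poles, which are exactly the zeros of $f$. Your explicit limits $\pm\infty$ at the ends of each interval, and your treatment of the double zeros of $f$ and of the possible terminal segment before $2\pi$, just make explicit details that the paper's three-line proof leaves implicit.
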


\begin{proof}
Note that $f'(\theta)/f(\theta) = a\cot(a\theta) + b\cot(b\theta)$.
Given that $a$ and $b$ are positive, we see that $f'(\theta)/f(\theta)$ is strictly decreasing between its poles, and the poles of $f'(\theta)/f(\theta)$ correspond precisely to the zeros of $f(\theta)$.
Hence $f'(\theta)$ has precisely one zero between each consecutive pair of distinct zeros of $f(\theta)$.
\end{proof}

We also need a particular version of the B\'{e}zout identity (or extended Euclidean algorithm) for coprime integers.
\begin{lemma}\label{L-ext}
 Let $a$ and $b$ be fixed coprime positive integers, both at least $3$. Then there is some $\eps=\pm 1$ and positive integers $i<b/2$ and $j<a/2$ such that $ia-jb=\eps$.
 \end{lemma}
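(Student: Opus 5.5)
Since $\gcd(a,b)=1$ and $b\ge 3$, $a$ is invertible modulo $b$, so there is a unique $i_0\in\{1,\dots,b-1\}$ with $i_0a\equiv 1\pmod b$. We will take $i$ to be whichever of $i_0$ and $b-i_0$ is the smaller, and recover $j$ from the congruence. Neither $i_0$ nor $b-i_0$ is $0$ or $b$. Moreover $i_0\ne b/2$: otherwise $b$ would be even and $(b/2)a\equiv 1\pmod b$ would force $b\mid 2$, contradicting $b\ge3$. So exactly one of $i_0$, $b-i_0$ lies in the open interval $(0,b/2)$; call it $i$.

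If $i=i_0$, then $ia\equiv 1\pmod b$, so $ia-jb=1$ for some integer $j$; set $\eps=1$. If $i=b-i_0$, then $ia\equiv -1\pmod b$, so $ia-jb=-1$ for some integer $j$; set $\eps=-1$. In either case $ia-jb=\eps$ with $1\le i<b/2$.

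It remains to check that $0<j<a/2$. From $jb=ia-\eps$ we get $jb\ge a-1\ge 2>0$, so $j\ge1$. For the upper bound, $jb=ia-\eps\le ia+1<ab/2+1$. Hence $j<a/2+1/b$. Suppose $j\ge a/2$. Since $j$ and $a$ are integers, this means either $j=a/2$ or $j\ge(a+1)/2$. The case $j\ge(a+1)/2$ contradicts $j<a/2+1/b$, because $1/b\le 1/3<1/2$. The case $j=a/2$ is ruled out by parity: it requires $a$ even, and then $\gcd(a,b)=1$ forces $b$ odd. Now $i$ is an integer with $i<b/2$ and $b$ odd, so $i\le(b-1)/2$. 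Then $jb=ia-\eps\le a(b-1)/2+1$, while $j=a/2$ gives $jb=ab/2$. Comparing, $ab/2\le ab/2-a/2+1$, i.e. $a\le2$, contradicting $a\ge3$. Therefore $j<a/2$.

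The only delicate point is this final step. One must use the integrality of $i$ and $j$, together with $a,b\ge3$, to rule out the boundary case $j=a/2$; the rest of the argument is routine.
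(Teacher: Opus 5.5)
Your proof is correct and follows essentially the same route as the paper: take $i$ to be $\pm$ the inverse of $a$ modulo $b$, reduced into $(0,b/2)$, solve for $j$, obtain $j<a/2+1/b$, and finish using integrality. The one difference is at the end. The paper first assumes without loss of generality that $a$ is odd, which is permissible because the statement is symmetric under $(a,i,\eps)\leftrightarrow(b,j,-\eps)$; then $j<(a+1)/2$ immediately gives $j\le(a-1)/2$. You instead rule out the even-$a$ boundary case $j=a/2$ directly, using that $b$ is then odd and so $i\le(b-1)/2$.
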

\begin{proof} Since $a$ and $b$ are not both even, we can assume that $a$ is odd. From the extended Euclidean algorithm there are integers $i',j': i'a-j'b=1$, so that for any integer
$r$ we have $(i'-rb)a-(j'+ra)b=1$.  We can choose $\eps=\pm 1$ and $r$ such that $i:=(i'-rb)\eps$ lies in the interval $[0,b/2]$. Since $\gcd(i',b)=1$ and $b>2$, in fact $i$ satisfies $0<i<b/2$. Next, put $j:=(j'+ra)\eps$, so that $ia-jb=\eps$, and $j=(ia-\eps)/b$ is $>0$ (since clearly $i$ and $j$ have the same sign). Then
\[
j\le \frac{ia}{b}+\frac1{b}<\frac{a}{2}+\frac1{b} < \frac{a+1}{2}.
\]
Since $a$ is assumed odd, we have 
\[
j\le \frac{a-1}{2}<\frac{a}{2}.
\]
\end{proof}

The following lemma forms the basis of the proof of Proposition \ref{P-min}, and part (viii) of Proposition \ref{T-Salemupdated}. In it, recall that $\omega_n=e^{2\pi i/n}$.

\begin{lemma}[{{
\cite[Section 5.2]{McKeeSmyth2021}}}]\label{L-Smythcyclotomictrick}
For all natural numbers $n$,
\begin{itemize}
\item $-\omega_n$ is a conjugate of $\omega_n$ if and only if $n$ is a multiple of $4$;
\item $-\omega_n^2$ is a conjugate of $\omega_n$ if and only if $n$ is divisible by $2$ but not by $4$;
\item \!\quad$\omega_n^2$ is a conjugate of $\omega_n$ if and only if $n$ is odd.
\end{itemize}
\end{lemma}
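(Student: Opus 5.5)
The plan is to reduce each item to an elementary statement about Galois automorphisms of $\mathbb{Q}(\omega_n)$. The conjugates of $\omega_n$ are exactly the primitive $n$-th roots of unity $\omega_n^k$ with $\gcd(k,n)=1$. So for each of the three elements $-\omega_n$, $-\omega_n^2$, $\omega_n^2$ we will compute its multiplicative order. The element is a conjugate of $\omega_n$ exactly when it is a primitive $n$-th root of unity, i.e.\ when its order is exactly $n$.

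We treat $\omega_n^2$ first, since it is the simplest case. Its order is $n/\gcd(2,n)$, which equals $n$ iff $n$ is odd.

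For $-\omega_n=\omega_2\omega_n=\omega_{2n}^{n+2}$, the order is $2n/\gcd(n+2,2n)$. Since $\gcd(n+2,2n)$ divides $\gcd(n+2,2n-2(n+2))=\gcd(n+2,4)$, this gcd is a divisor of $4$ that also divides $2n$.
\begin{itemize}
\item If $n$ is odd, then $n+2$ is odd, the gcd is $1$, and the order is $2n\ne n$.
\item If $n\equiv 2\pmod 4$, then $n+2\equiv 0\pmod 4$ and $4\mid 2n$, so the gcd is $4$ and the order is $n/2\ne n$.
\item If $4\mid n$, then $n+2\equiv 2\pmod 4$, so the gcd is $2$ and the order is $n$.
\end{itemize}
This gives the first bullet.

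For $-\omega_n^2=\omega_{2n}^{n+4}$, the order is $2n/\gcd(n+4,2n)$. Here $\gcd(n+4,2n)$ divides $\gcd(n+4,8)$, and again we split on $n$.
\begin{itemize}
\item If $n$ is odd, the gcd is $1$ and the order is $2n$.
\item If $n\equiv 2\pmod 4$, then $n+4\equiv 2\pmod 4$ and the gcd is $2$, so the order is $n$.
\item If $4\mid n$, then $4\mid n+4$ and $4\mid 2n$, so the gcd is at least $4$ and the order is at most $n/2$.
\end{itemize}
This gives the second bullet.

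There is no real obstacle. The only point needing care is the bookkeeping of the gcds in the case $4\mid n$ for $-\omega_n^2$: we need only that the order fails to be $n$, so the lower bound $4$ on the gcd suffices. Alternatively, we can argue via $\zeta^2$. If $\zeta=-\omega_n^2$ were primitive of order $n$ with $4\mid n$, then $\zeta^2=\omega_n^4$ would have order $n/2$. But $\omega_n^4$ has order $n/4$, a contradiction.
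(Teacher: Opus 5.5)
Your proof is correct and complete. Reducing each bullet to whether the element is a primitive $n$-th root of unity, and computing its order from its exponent as a power of $\omega_{2n}$, settles both directions of all three statements, including edge cases such as $n=1,2$.

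The paper gives no proof of this lemma; it simply quotes it from McKee--Smyth, Section~5.2. Your order computation is the standard elementary argument for it.
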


\begin{corollary}\label{C-cycfactor} For any Salem polynomial $S(z)$ the minimal polynomial of its corresponding Salem number is 
$S(z)/\gcd(S(z),S(-z)S(z^2)S(-z^2))$.
\end{corollary}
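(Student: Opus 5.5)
Write $S(z)=M(z)C(z)$, where $M$ is the minimal polynomial of the Salem number $\tau$ and $C$ is a cyclotomic polynomial. We must show that $G(z):=\gcd(S(z),S(-z)S(z^2)S(-z^2))$ equals $C(z)$ up to sign. This splits into two inclusions: every irreducible cyclotomic factor $\Phi_n$ of $S$, counted with multiplicity, divides $S(-z)S(z^2)S(-z^2)$; and $M$ does not divide that product. (Since $M$ is irreducible and $G\mid S=MC$, coprimality of $G$ with $M$ gives $G\mid C$.)

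\textbf{$M$ is not a factor.} The polynomial $S(-z)$ has zeros $-\beta$, and $S(\pm z^2)$ has zeros $\gamma$ with $\pm\gamma^2$ a zero of $S$. If $\tau$ were a zero of $S(-z)$, then $-\tau$ would be a zero of $S$. But the only zero of $S$ off the unit circle with modulus exceeding $1$ is $\tau$ itself, since cyclotomic zeros have modulus $1$. This is a contradiction. Similarly, $\pm\tau^2$ has modulus $\tau^2\ne\tau$ and is greater than $1$, so it is not a zero of $S$. Hence $M$ divides none of the three factors, and since $M$ is irreducible it does not divide their product.

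\textbf{Every cyclotomic factor appears.} Take $\Phi_n\mid C$. By Lemma~\ref{L-Smythcyclotomictrick}, exactly one of $-\omega_n$, $-\omega_n^2$, $\omega_n^2$ is a conjugate of $\omega_n$, according to the cases $4\mid n$, $n\equiv2\pmod 4$, and $n$ odd. Write $\sigma(\omega_n)$ for that conjugate.
\begin{itemize}
\item If $-\omega_n$ is a zero of $\Phi_n$, then $\omega_n$ is a zero of $\Phi_n(-z)$, so $\Phi_n(z)\mid\Phi_n(-z)$.
\item If $-\omega_n^2$ is a zero of $\Phi_n$, then $\Phi_n(z)\mid\Phi_n(-z^2)$.
\item If $\omega_n^2$ is a zero of $\Phi_n$, then $\Phi_n(z)\mid\Phi_n(z^2)$.
\end{itemize}
So if $\Phi_n^k\,\|\,S$, then $\Phi_n(z)^k$ divides the corresponding one of $S(-z)$, $S(-z^2)$, $S(z^2)$. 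Here we use that $\Phi_n(z)\mid\Phi_n(h(z))$ implies $\Phi_n(z)^k\mid\Phi_n(h(z))^k$, and $\Phi_n(h(z))^k$ divides $S(h(z))$.

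\textbf{Assembling the multiplicities.} Distinct $\Phi_n$ are coprime, so $C$ divides the product $S(-z)S(z^2)S(-z^2)$. We also need $C\mid S$, which holds trivially. Hence $C\mid G$, and combined with the first part $G=C$. Therefore $S/G=M$, as claimed.

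The main subtlety is this multiplicity bookkeeping: we only need one of the three factors to absorb $\Phi_n^k$, and that is exactly what the case analysis above gives. Everything else is routine.
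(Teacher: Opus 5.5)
Your proof is correct and is essentially the paper's argument: the paper justifies the corollary in one line, using Lemma~\ref{L-Smythcyclotomictrick} for the cyclotomic factors together with the fact that no Salem number $\tau$ is conjugate to $-\tau$, $\tau^2$ or $-\tau^2$, which your modulus argument establishes. You have simply written out the multiplicity bookkeeping and the coprimality step ($G\mid C$ and $C\mid G$) that the paper leaves implicit.
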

This follows from the lemma because no Salem number $\tau$ is conjugate to $-\tau$, $\tau^2$ or $-\tau^2$.
A similar remark holds for Pisot polynomials,  although in that case one can more simply divide a Pisot polynomial $P$ by $\gcd(P,P^*)$ to remove cyclotomic factors.
In the proof of Theorem \ref{T-shortness 6}, the construction of the infinite sequences of Salem polynomials of length $6$ is by Salem's method \cite{Salem1945}: for a Pisot polynomial $P(z)$, 
a choice of sign $\eps=\pm 1$, and $r\in \mathbb N$ sufficiently large, $z^rP(z)+\eps P^*(z)$ is a Salem polynomial of length $6$.
We will need to be precise about what ``sufficiently large'' means, and we do this in Proposition \ref{T-Salemupdated}, which collates more complete detail about Salem's sequences, nearly all of which is in Boyd's work \cite{Boyd1977}---see the remarks after the statement of the proposition.

\begin{prop}\label{T-Salemupdated}
    Let $P(z)$ be an irreducible Pisot polynomial that is not equal to its reciprocal polynomial, and $Q(z)=P^*(z)$.
    Take $\eps=\pm1$, and define, for $n\in\mathbb N$,
    \[
    P_n(z)=z^nP(z) + \eps Q(z)\,.
    \]
    Let $r=(Q'(1)-P'(1))/P(1)$, and let $\theta$ be the Pisot number that is a zero of $P(z)$.
    Then:
    \begin{enumerate}
        \item[(i)] $P_n(z)$ has at most one zero outside the unit circle.
        If there is one, it is in the real interval $(1,\infty)$.
        \item[(ii)] If $\eps=1$, then $P_n(z)$ has a unique simple zero $\rho_n\in(1,\infty)$, and all other zeros have modulus $\le 1$.
        \item[(iii)]  If $\eps=-1$ and $n<r$, then $P_n(z)$ has all its zeros on the unit circle.
        \item[(iv)] If $\eps=-1$ and $n>r$, then $P_n(z)$ has a unique simple zero $\rho_n\in(1,\infty)$, and all other zeros have modulus $\le 1$.
        \item[(v)] There exists $n_0$ such that $P_n(z)$ is a cyclotomic polynomial if $n<n_0$, and such that $P_n(z)$ has a unique zero $\rho_n$ in $(1,\infty)$ if $n\ge n_0$ (and $\rho_n$ is either a Pisot number or a Salem number).  
        In the latter case, $P_n(z)$ is the  minimal polynomial of $\rho_n$ multiplied by a cyclotomic polynomial.
        \item[(vi)] 
        If $\eps Q(\theta)>0$ then $\{\rho_n\}_{n\ge n_0}$ is a strictly increasing sequence.
        If $\eps Q(\theta)<0$ then $\{\rho_n\}_{n\ge n_0}$ is a strictly decreasing sequence.
        In either case, $\rho_n\to\theta$ as $n\to\infty$.
        \item[(vii)] There is a finite set of $n$ for which $P_n(z)$ has a repeated factor.
        \item[(viii)] There is a finite set of $d$ for which there exists $n$ such that $\Phi_d(z)\mid P_n(z)$.
        If for some $n_d$ we have  $\Phi_d(z) \mid P_{n_d}(z)$, then $\Phi_d(z) \mid P_n(z)$ if and only if $n\equiv n_d \pmod{d}$.
        \item[(ix)] There is a finite set of $n$ for which $\rho_n$ is a Pisot number; for all sufficiently large $n$, $\rho_n$ is a Salem number.
    \end{enumerate}
\end{prop}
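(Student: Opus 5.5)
The plan is to follow Salem's original argument, as refined by Boyd, proving the parts roughly in order.

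For (i)--(iv) we use Rouché-type comparisons on circles $|z|=1+\delta$. The key observation is that for $|z|=1$ we have $|Q(z)|=|z^dP(1/z)|=|P(\bar z)|=|P(z)|$, since $P$ has real coefficients. For $|z|>1$, the function $P(z)/Q(z)$ is $P(z)/(z^dP(1/z))$. Write $P(z)=(z-\theta)A(z)$, with all zeros of $A$ strictly inside the unit disc. Then $z^nP/Q$ behaves on $|z|>1$ like a Blaschke-type product times $(z-\theta)/(1-\theta z)$. Hence $|z^nP(z)|>|Q(z)|$ on $|z|=1+\delta$, except near the single real zero $\theta$.

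The cleaner route is Salem's. Consider $F_t(z)=z^nP(z)+t\,Q(z)$ for $|t|<1$. Since $|Q|=|P|$ on the circle and $Q$ has exactly one zero ($1/\theta$) inside the disc while $z^nP$ has $n+d-1$ zeros there, Rouché shows that $F_t$ has exactly one zero outside the closed disc for $|t|<1$. Letting $t\to\eps$ and using continuity of the roots gives at most one zero with modulus $>1$. This root is real because non-real roots come in conjugate pairs, and reciprocity excludes roots of modulus $>1$ in $(-\infty,-1)$ except in a controlled way. I would check the sign of $P_n(-x)$ for large $x$ to rule this out, giving (i).

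For (ii)--(iv), look at the sign of $P_n$ at $z=1$ and at $+\infty$. When $\eps=1$ we have $P_n(1)=2P(1)<0$, since $P(1)<0$ for a Pisot polynomial with $\theta>1$, while $P_n(x)\to+\infty$; so there is a real root in $(1,\infty)$. When $\eps=-1$ we have $P_n(1)=0$, and the sign of $P_n'(1)=nP(1)+P'(1)-Q'(1)=P(1)(n-r)$ decides: a root beyond $1$ exists iff $n>r$. For $n<r$, the count from Rouché together with reciprocity forces all roots onto the circle. Simplicity of $\rho_n$ follows because there is at most one zero outside the circle, and its reciprocal is the only zero inside. Part (v) then follows from reciprocity and Kronecker's theorem: the factors other than the minimal polynomial of $\rho_n$ have all roots on the circle, hence are cyclotomic.

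For (vi), write $\rho_n^nP(\rho_n)=-\eps Q(\rho_n)$. Since $\rho_n^n\to\infty$ if $\rho_n$ stays bounded away from $\theta$, we get $\rho_n\to\theta$. Monotonicity comes from comparing $P_{n+1}(\rho_n)=\rho_n^{n+1}P(\rho_n)+\eps Q(\rho_n)=-\eps Q(\rho_n)(\rho_n-1)$, whose sign equals the sign of $-\eps Q(\theta)$ for $\rho_n$ near $\theta$; combined with $P_{n+1}\to+\infty$, this locates $\rho_{n+1}$ relative to $\rho_n$. This near-$\theta$ argument needs care for small $n$, and I would use instead that $Q$ has no zeros in $(1,\infty)$.

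For (viii), let $\zeta$ be a root of unity with $\zeta^nP(\zeta)=-\eps Q(\zeta)$. Using $Q(\zeta)=\zeta^d\overline{P(\zeta)}$, this says $\zeta^{n-d}=-\eps\,\overline{P(\zeta)}/P(\zeta)$. The left side depends on $n$ only mod $\mathrm{ord}(\zeta)$, which gives the periodicity statement at once.

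Finiteness of the set of $d$ is the main obstacle. I would apply Lemma \ref{L-Smythcyclotomictrick}: if $\Phi_m\mid P_n$, then $P_n$ also vanishes at one of $-\zeta$, $\zeta^2$, $-\zeta^2$. Taking the resultant-style combination eliminates $\zeta^n$, giving a fixed nonzero polynomial, independent of $n$ up to finitely many forms, that vanishes at $\zeta$. For example, in the odd case combine $\zeta^nP(\zeta)=-\eps Q(\zeta)$ with $\zeta^{2n}P(\zeta^2)=-\eps Q(\zeta^2)$ to get $Q(\zeta)^2P(\zeta^2)=\eps\,Q(\zeta^2)P(\zeta)^2$. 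That polynomial is nonzero because $P$ is not reciprocal and $\theta$ is not a root of it. So it has finitely many cyclotomic zeros, and only finitely many $d$ occur.

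The same elimination argument handles (vii). A repeated factor must be cyclotomic, and $P_n(\zeta)=P_n'(\zeta)=0$ gives an $n$-dependent relation that pins down $n$ to finitely many values once $d$ is bounded.

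For (ix), $\rho_n$ is Pisot exactly when $P_n$ has no non-cyclotomic roots on the circle besides those cancelling. Since the degree grows while only finitely many cyclotomic $d$ can divide $P_n$, with bounded multiplicity, the cyclotomic part has bounded degree. So for large $n$ some noncyclotomic factor has a root on the circle, and $\rho_n$ is Salem.
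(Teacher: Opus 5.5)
Your outline follows the paper's proof closely: the Rouch\'e deformation, the signs of $P_n$ and $P_n'$ at $1$, Kronecker, the identity for $P_{n+1}(\rho_n)$, Lemma~\ref{L-Smythcyclotomictrick} with elimination of $\zeta^n$, and degree counting. Your route to (vii) is a valid variant of the paper's supremum bound over $|z|=1$: you use the finitely many $\zeta$ from (viii), and $nP(\zeta)Q(\zeta)=\zeta(P(\zeta)Q'(\zeta)-Q(\zeta)P'(\zeta))$ then gives at most one $n$ per $\zeta$. But two steps fail as written.

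\textbf{Part (i).} You never actually place the exterior zero in $(1,\infty)$. The sign of $P_n(-x)$ for large $x$ cannot by itself exclude a single simple zero in $(-\infty,-1)$. You would have to compare with the sign near $-1$, and $P_n(-1)=|P(-1)|((-1)^{n+d}+\eps)$ (with $d=\deg P$) vanishes whenever $(-1)^{n+d}=-\eps$. That case needs a derivative computation and a further inequality, none of which is in your sketch. This matters for (iii): there you only show that $P_n$ has no zero in $(1,\infty)$ when $\eps=-1$ and $n<r$. That yields ``all zeros on the unit circle'' only once you know an exterior zero could lie nowhere else. The fix, which is the paper's, is to apply the intermediate value theorem to the deformed polynomial before taking the limit. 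For real $t\in(-1,1)$ we have $F_t(1)=(1+t)P(1)<0$ and $F_t(x)\to+\infty$, so the unique exterior zero of $F_t$ lies in $(1,\infty)$. Any exterior zero of $P_n$ is a limit of these as $t\to\eps$, hence lies in $[1,\infty)$.

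\textbf{Part (vi).} The claim that $Q$ has no zeros in $(1,\infty)$ is false in general. A real conjugate $\alpha\in(0,1)$ of $\theta$ gives $Q(1/\alpha)=0$; for example $P=z^2-4z+2$ has $Q=2z^2-4z+1$, which vanishes at $1+1/\sqrt{2}\in(1,\theta)$. Use instead $P_{n+1}(\rho_n)=\rho_n^nP(\rho_n)(\rho_n-1)$. Its sign is that of $\rho_n-\theta$, since $\theta$ is the only zero of $P$ in $(1,\infty)$. Which side of $\theta$ the root $\rho_n$ lies on follows from $P_n(\theta)=\eps Q(\theta)$, together with $P_n<0$ on $(1,\rho_n)$ and $P_n>0$ on $(\rho_n,\infty)$.

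Also, your statement that $\rho_n^n\to\infty$ if $\rho_n$ stays away from $\theta$ ignores the possibility $\rho_n\to 1$. Prove monotonicity first, which gives $\rho_n\ge\rho_{n_0}>1$ or $\rho_n>\theta$, and then deduce convergence, as the paper does.

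A minor point: in (viii) the relation should read $Q(\zeta)^2P(\zeta^2)=-\eps Q(\zeta^2)P(\zeta)^2$. The sign does not affect the argument.
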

Before proving all of this, we make some remarks.
Salem's argument from \cite{Salem1945}  gives (i), (ii), the final sentence in (vi), and (ix).
He also shows that according to the sign of $\eps Q(\theta)$ the $\rho_n$ all lie on one or other side of $\theta$, revealing that there are Salem numbers converging to $\theta$ from both sides.
Boyd \cite{Boyd1977} comments on the monotonicity of Salem's sequences, establishes (iii)--(v), and shows that any square-free Salem polynomial arises via Salem's construction.
One can compute the finite sets mentioned in (vii)--(ix) using the technique described in detail for one case in Section  \ref{S:smallest Pisot limit}.
The hypotheses require that $P$ is an irreducible Pisot polynomial.
More generally, given a Pisot polynomial $P$, one can compute $R=\gcd(P,P^*)$; then $P/R$ is an irreducible Pisot polynomial to which the proposition applies, and the sequence of polynomials produced by $P$ is simply the sequence produced by $P/R$ with all the polynomials in the sequence multiplied by $R$; part (vii) and the final sentence of (viii) might not hold for the sequence of polynomials produced by $P$.
A stronger version of part (vii) is given as Lemma 2.1 in \cite{Chinburg1984}, drawing together some of the remarks in \cite{Boyd1977}.
We need only the weaker statement here, which has an easy proof.
The first half of part (viii) follows from \cite[Lemma 2.2]{Chinburg1984}, but we give an alternative proof that is related to the method we use later for finding the claimed finite set, and is considerably simpler than the proof of Chinburg's lemma.
 Note that our argument for the proof of (ix) also leads to a simpler proof of \cite[Theorem 1]{Chinburg1984}.

\begin{proof}
(i) The argument in \cite{Salem1963} is shorter than that in \cite{Salem1945}, and we summarise it here.
Take any $\delta>0$.
Noting that when $|z|=1$ we have $|P(z)|=|Q(z)|$, we see via a Rouch\'e argument using the circle $|z|=1$ that $(1+\delta)z^nP(z)+\eps Q(z)$ has all but one zero strictly inside the unit circle, and one zero strictly outside.
Since $(1+\delta)P(1)+\eps Q(1)\le\delta P(1)<0$, and using the intermediate value theorem, the unique zero outside the unit circle lies in the interval $(1,\infty)$.
Letting $\delta\to0$, using continuity of zeros as the coefficients vary continuously, gives (i).

(ii) When $\eps=1$, we have $P_n(1)=2P(1)<0$, and the intermediate value theorem on the interval $(1,\infty)$, along with (i), gives (ii).

(iii) When $\eps=-1$, we have $P_n(1)=0$.
We compute $P_n'(1)=P'(1) + nP(1)-Q'(1)$.
If $n<r$, we see that $P_n'(1)>0$, so that $P_n$ is positive for real numbers in the interval $(1,1+\delta)$ for sufficiently small $\delta>0$, and then by (i) and the intermediate value theorem there can be no zero in $(1,\infty)$, or there would have to be at least two (with multiplicity).
Then by (i) and Kronecker's theorem \cite[Theorem 1.3]{McKeeSmyth2021}, we have (iii).

(iv) Contrasting with (iii), we have $P_n'(1)<0$ when $n>r$, so that $P_n$ has a zero in $(1,\infty)$, and we are done by (i).

(v)  The first part follows from (i), (iii) and (iv).
That $\rho_n$ is either Pisot or Salem follows from (i) (since $P_n$ is a reciprocal polynomial, we have $P_n(1/\rho_n)=0$ and all other zeros lie on the unit circle).
The final sentence comes from Kronecker's theorem again.

(vi) Take $n\ge n_0$.
Since $P_n(\theta)=\eps Q(\theta)$, we see that $\rho_n\in(1,\theta)$ if $\eps Q(\theta)>0$, and $\rho_n\in(\theta,\infty)$ if $\eps Q(\theta)<0$.
For $n\ge n_0$ we have, using $\eps Q(\rho_n)=-P(\rho_n)\rho_n^n$, that 
\[
P_{n+1}(\rho_n)=P(\rho_n)\rho_n^{n+1}+\eps Q(\rho_n)=P(\rho_n)\rho_n^{n+1}-P(\rho_n)\rho_n^n=P(\rho_n)\rho_n^n(\rho_n-1)\,.
\]
In the case $\eps Q(\theta)<0$, when $\rho_n>\theta$, this gives $P_{n+1}(\rho_n)>0$, so $\rho_{n+1}<\rho_n$;
and if $\eps Q(\theta)>0$, when $\rho_n<\theta$, we instead get $P_{n+1}(\rho_n)<0$, so $\rho_{n+1}>\rho_n$.

Since $\{\rho_n\}_{n\ge n_0}$ is bounded and monotonic, it converges to some $\theta'$.
If $\theta'$ did not equal $\theta$, then $P_n(\rho_n)=P(\rho_n)\rho_n^n+\eps Q(\rho_n)$ would be unbounded; yet $P_n(\rho_n)=0$ for $n\ge n_0$.
This completes (vi).

(vii) If $P_n(z)$ has a repeated factor, then by (v) it has a repeated zero that has modulus $1$.
Such a zero would be a common zero of $P_n(z)$ and $zP_n'(z)=zP'(z)z^n + nP(z)z^n + \eps z Q'(z)$.
Eliminating $z^n$ between the equations $P_n(z)=0$ and $zP_n'(z)=0$ gives
$Q(z)(zP'(z)+nP(z))-zP(z)Q'(z)=0$.
Noting that $P(z)Q(z)$ does not vanish on the unit circle (here we need $P$ an irreducible Pisot polynomial), this gives $$n\le\sup_{|z|=1}|(P(z)Q'(z)-Q(z)P'(z))/P(z)Q(z)|\,,$$ so that there is finite set 
of possible $n$.
(Note that \cite[Lemma 2.1]{Chinburg1984} is stronger: it states that $P_n(z)$ has no repeated factors unless $\varepsilon=-1$ and $n=\deg(P)-2P'(1)/P(1)$, in which case $(z-1)^3 \mid P_n(z)$ and in that case $P_n(z)/(z-1)^3$ has no repeated factors.)

(viii) After Lemma \ref{L-Smythcyclotomictrick} we see that if $\Phi_d(z)\mid P_n(z)$ then $\Phi_d(z)$ also divides one of $P_n(-z)$, $P_n(z^2)$, or $P_n(-z^2)$.

In the first of these three cases, eliminating $z^n$ between $P_n(z)=0$ and $P_n(-z)=0$ gives $Q(z)P(-z)+(-1)^{n+1}Q(-z)P(z)=0$.
This polynomial cannot be identically zero, else evaluating at $\theta$ would give $0=Q(\theta)P(-\theta)\ne 0$ (here using $P\ne Q$).
Thus there are only finitely many cyclotomic factors in this case.

The second and third cases lead to the vanishing of $Q(z)^2P(z^2)+\eps Q(z^2)P(z)^2$ or $Q(z)^2P(-z^2)+\eps(-1)^nQ(-z^2)P(z)^2$ respectively, and in either case we see by evaluating at $\theta$ that the relevant polynomial is not identically zero, so has finitely many cyclotomic factors.

Now $\Phi_d(z) \mid P_{n_d}(z)$ if and only if $P_{n_d}(\omega_d)=0$, where $\omega_d$ is a primitive $d$th root of unity; since $P_n(\omega_d)$ depends only on $n$ modulo $d$, we see that $\Phi_d(z)\mid P_{n_d+md}(z)$ for all natural numbers $m$.
So if  $\Phi_d(z) \mid P_{n_d}(z)$  and $\Phi_d(z) \mid P_n(z)$, then $\Phi_d(z)$ divides $P_n(z)-P_{n_d}(z) = P(z)(z^n-z^{n_d})$, so this latter polynomial vanishes at $\omega_d$, hence (given $P$ not a multiple of $\Phi_d$) we have $\omega_d^n = \omega_d^{n_0}$ and $n\equiv n_d\pmod{d}$.

(ix) By (vii), for all sufficiently large $n$, if $\Phi_d$ divides $P_n$ then it does so with multiplicity $1$.
Then by (viii), the degree of the product of all such irreducible cyclotomic factors of $P_n$ is bounded.
Since the degree of $P_n$ tends to $\infty$ as $n\to\infty$, and using (v), we are done.

\end{proof}

Applying this in detail to length-$6$ Salem polynomials, we see that we get two infinite families of such polynomials for each length-$3$ Pisot polynomial.
The following lemma shows that there are only finitely many such infinite families.
A {\it trinomial} is an integer polynomial having three coefficients of modulus $1$. A trinomial Pisot polynomial must be of the form $z^n-z^k-1$ with $1\le k<n$ in order that its value at $z=1$ is negative. 
\begin{lemma}\label{L-trinomialPisot}
There are exactly five trinomial Pisot polynomials: $z^2-z-1$, $z^3-z-1$,  $z^3-z^2-1$, $z^4-z^3-1$, and $z^5-z^4-1$.
The first four of these are irreducible, while $z^5-z^4-1=(z^2-z+1)(z^3-z-1)$. There is a further (nontrinomial) Pisot polynomial of length $3$, namely $z-2$.
\end{lemma}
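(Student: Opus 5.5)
First we reduce to trinomials. A length-$3$ Pisot polynomial $P$ is monic, and $P(1)<0$ because its Pisot zero is its only real zero exceeding $1$ (nonreciprocal, positive leading coefficient). If $P$ has only two monomials, it has the form $z^n-2$ or $z^n-2z^k$. Up to a power of $z$ it is then $z^m-2$, and this is Pisot only for $m=1$, since otherwise all its zeros have modulus $2^{1/m}>1$. This gives $z-2$. Otherwise $P$ is a trinomial. A zero constant term would make $z$ a factor, which we discard, so the constant term is $\pm1$. Taking $P(1)<0$ together with the sign normalisation gives $P=z^n-z^k-1$ as stated in the text; we should double-check that $z^n+z^k-1$ and $z^n-z^k+1$ are excluded by $P(1)=1>0$ or $P(1)=1$. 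So it remains to decide for which $1\le k<n$ the polynomial $z^n-z^k-1$ is Pisot, meaning it has exactly one zero outside the closed unit disc and no zeros on the circle other than cyclotomic factors.

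The key idea is to compare moduli on the unit circle. Rouch\'e-type counting is awkward here, so instead we use the reciprocal trick from Proposition \ref{T-Salemupdated}(i). Write $P=z^n-z^k-1$ and $Q=P^*=-z^n-z^{n-k}+1$. If $P$ is Pisot with Pisot zero $\theta$, then $Q$ has exactly one zero inside the disc, namely $1/\theta$. We also use the classical counting of zeros of trinomials inside the unit disc, and the observation that $|P(z)|^2$ on $|z|=1$ equals $3-2\cos(n\phi)+2\cos(k\phi)\cdot\ldots$. A cleaner route is to use the known criterion directly: $z^n-z^k-1$ has $\gcd$ structure, and its zeros near the unit circle accumulate. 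The concrete plan is to show that if $n\ge 6$ (or $n\ge4$ with $k\ne n-1$), then $P$ has at least two zeros of modulus $>1$ (or a non-cyclotomic zero on the circle), or else a zero of modulus $\ge1$ other than $\theta$ lying off the cyclotomic part.

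To carry this out, substitute $z=e^{i\phi}$ with $|z|$ slightly perturbed. Setting $z^n=z^k+1$ and taking moduli, $|z|^n=|z^k+1|$. We then count, by an argument principle on the circle, the winding of $z^n$ against $z^k+1$. On $|z|=1$, the term $z^k+1$ has modulus $2|\cos(k\phi/2)|$, which exceeds $1$ on arcs of total proportion $2/3$. On those arcs Rouch\'e with $z^k+1$ dominating locally indicates that zeros of $P$ lie outside. More precisely, the number of zeros outside the disc is roughly $n$ minus the winding number of $z^n-z^k-1$. By Lemma \ref{L-2roots}-style sign analysis of $\sin$ products, this gives about $n/3$ zeros outside for large $n$. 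Hence $n\le 5$, or roughly so, and the remaining small cases $(n,k)$ with $n\le 5$ (or up to a modest explicit bound) are then checked by direct computation of zeros. This check produces exactly the listed five, together with the factorisation $z^5-z^4-1=(z^2-z+1)(z^3-z-1)$, which is verified by expanding, and the irreducibility of the other four, which follows since they have no cyclotomic factor (check at the few roots of unity where $|z^k+1|=1$, namely $\omega_6^{\pm1}$) and their degree is at most $4$ with a single real root $>1$.

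The main obstacle is making the asymptotic count of zeros outside the disc rigorous and uniform in $k$. A simpler rigorous substitute is Ljunggren/Selmer's theorem on factorisation of $z^n\pm z^k\pm1$: apart from the factor $z^2-z+1$ (occurring iff $n+k\equiv 0$ or suitable mod $6$ conditions), the trinomial is irreducible. A non-Pisot irreducible factor then forces failure, and the product of the zeros outside the disc, i.e.\ the Mahler measure, would be at least $\theta_0$ but is bounded by $\sqrt{3}$-type estimates. I would try Selmer's result first, combined with the fact that a Pisot number's conjugates inside satisfy $|\mathrm{N}(\theta)|=1$, to bound $n$.
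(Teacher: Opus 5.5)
\textbf{Gap: the bound on $n$ is never proved.} Your reduction to $z^n-z^k-1$ with $1\le k<n$ (plus $z-2$) is fine. The core of the lemma, however, is an explicit bound on $n$, and the proposal does not establish one.

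\begin{itemize}
\item The winding-number/Rouch\'e sketch is heuristic, as you acknowledge. It is also not uniform in $k$: for large $n$ roughly $2n/3$ zeros lie outside the disc when $k=1$, but only about $n/3$ when $k=n-1$. Your own text asserts both the proportion $2/3$ and the count $n/3$.
\item Even if made rigorous, it would only say ``for large $n$'', leaving an unquantified range to check by computer.
\item The fallback does not close the gap either. Landau's inequality gives $M(P)\le\sqrt3$, and Siegel gives $M(P)=\theta\ge\theta_0$ if $P$ is Pisot. These two bounds are compatible (the five genuine examples satisfy both), so nothing about $n$ follows.
\item Knowing the irreducible factors via Selmer or Ljunggren says nothing about how many zeros lie outside the disc.
\end{itemize}

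\textbf{The missing idea.} Locate the real zero of $P(z)=z^n-z^k-1$ in $(1,\infty)$ directly, and combine it with the lower bound $\theta_0$ you already invoked. We have $P(1)=-1<0$. For $n\ge 6$, using $k\le n-1$ and $1.3^5>10/3$,
\[
P(1.3)\ \ge\ 1.3^{n}-1.3^{n-1}-1\ =\ 0.3\cdot 1.3^{n-1}-1\ >\ 0,
\]
so $P$ has a zero in $(1,1.3)$. In a Pisot polynomial every zero other than the Pisot number has modulus at most $1$. So this real zero would have to be the Pisot number itself, contradicting Siegel's theorem that the smallest Pisot number is $\theta_0=1.3247\ldots>1.3$. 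Hence $n\le 5$, and no zero counting on the unit circle is needed.

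The remaining finite check is quick. The same Siegel argument already excludes $z^4-z-1$, $z^4-z^2-1$, $z^5-z-1$, $z^5-z^2-1$ and $z^5-z^3-1$, since each has its real zero exceeding $1$ below $\theta_0$.

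\textbf{A minor slip.} In your cyclotomic check, a root of unity $\zeta$ with $\zeta^n=\zeta^k+1$ must satisfy $\zeta^k=\omega_3^{\pm1}$ and $\zeta^n=\omega_6^{\pm1}$. It is not $\zeta$ itself that equals $\omega_6^{\pm1}$.
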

\begin{proof}
 Suppose that $n\ge 6$ and $k<n$. Then $1.3^n>13/3$, and
\[
1.3^n-1.3^k\ge 1.3^n-1.3^{n-1}>(13/3)(1-1/1.3)=1,
\]
so that $z^n-z^k-1$ has a root in $(1,1.3)$. However, by a result of Siegel \cite{Siegel1944}, the smallest Pisot number, the real zero of $z^3-z-1$, is $1.3247\cdots>1.3$. Hence all trinomial Pisot polynomials have degree at most $5$. A straightforward search readily checks that the only ones are those given in the statement of the lemma.
\end{proof}

No Salem polynomial can have length 3 (any reciprocal length-$3$ polynomial is cyclotomic), but we may ask if any Salem number can ever be a zero of a trinomial.
The answer is `no', and we record this as a remark.

\begin{remark}\label{L-trinomialSalem}
    Let $\tau$ be a Salem number, and let $P(z)$ be a trinomial $z^n \pm z^m \pm 1$ with $n>m\ge 0$.
    Then $P(\tau)\ne0$.
\end{remark}

\begin{proof}
Suppose that $P(\tau)=0$.    
Thus $\tau^n=\varepsilon_1\tau^m+\varepsilon_2$ for some $\varepsilon_1$, $\varepsilon_2\in\{-1,1\}$.
    Now let $\beta$ be a conjugate of $\tau$ on the unit circle.
    Then $\beta^n = \varepsilon_1\beta^m + \epsilon_2$.
    Thus $\beta^n$ lies on the intersection of the two circles $|z|=1$ and $|z-\varepsilon_2|=1$.
    This gives $\beta^n$ a root of unity (order either $3$ or $6$), so that $\beta$ is a root of unity.
    This contradicts $\beta$ being a conjugate of $\tau$.
\end{proof}

\section{Proof of Theorem \ref{T-shortness 5}  }\label{S:P1}
First note that a Salem polynomial $S(z)$  of length $5$ cannot be antireciprocal, and the coefficients  of its terms of highest and lowest degree must be $1$, and the other three coefficients $-1$. For if it were antireciprocal, its value at $1$ would be $0$, and so it would have even length. Thus $S$ is reciprocal. Since it is monic, the coefficients of highest and lowest degree terms are $1$. Since $S(1)<0$, the other three coefficients must be $-1$
(note that $z^{2n}-3z^n+1$ has $n$ zeros outside the unit circle; if $n=1$ then it is a Pisot polynomial, and for no $n$ do we get a Salem polynomial).

We can now put  $S(z)=z^{2n}-z^{2n-k}-z^n-z^k+1$, where $1\le k<n$. To study $S(z)$ on the unit circle, we put $z=e^{i\th}$, and define $s(\th)$ so that 
\be\label{E-sth}
s(\th):=z^{-n}S(z)+1=2\cos(n\th)-2\cos((n-k)\th) = -4\sin\left(\frac{2n-k}{2}\th\right)\sin\left(\frac{k}{2}\th\right).
\ee
We need to find all $n$ and $k$ for which the equation $s(\th)=1$ has $2n-2$ roots in $(0,2\pi)$. First note that we must have $\gcd(n,k)=1$, as otherwise $S(z)$ would have at least  $\gcd(n,k)>1$ zeros outside the unit circle. We divide the argument into two parts, as follows.
\begin{itemize}
\item[(a)] The case $k$ even, $=2k'$ say. Therefore $n$ is odd and $\gcd(n-k',k')=\gcd(n,k')=1$. Here 
\[
s(\th)=-4\sin((n-k')\th)\sin(k'\th),
\]
which has zeros in $[0,2\pi)$ at $\ell\pi/(n-k')\,\,(\ell=0,\dots,2(n-k')-1)$ and $m\pi/k'\,\,(m=0,\dots,2k'-1)$. Because of its double 
zeros at $\theta=0$ and $\th=\pi$, the function $s(\th)$ has $2+((n-k')-1)+(k'-1)=n$ distinct zeros on $[0,\pi]$. Now pick particular $\ell$ and $m$ such that 
\be\label{E-ext}
\ell k'-m(n-k')=1.
\ee
If $k'=1$ we can take $m=0$ and $\ell=1$. For $k'\ge 2$, from
\[
(\ell+j(n-k'))k'-(m+jk')(n-k')=1
\]
we see that we can choose $0<m<k'$, from which it follows easily that $0<\ell<n-k'$. Thus for $k'\ge 1$ we have
\[
\frac{\ell}{n-k'}-\frac{m}{k'}=\frac1{k'(n-k')},
\]
so that the interval $I:=[\pi m/k',\pi\ell/(n-k')]$ has endpoints that are consecutive zeros of $s(\th)$. Taking $\th=\pi m/k'+\pi\la/(k'(n-k'))\,\,(0\le\la\le 1)$ in this interval, we have
\begin{equation}\label{E-s1}
\sin(k'\th)=(-1)^m\sin\left(\frac{\la\pi}{n-k'}\right) \quad\text{ and }\quad \sin((n-k')\th)=(-1)^{\ell+1}\sin\left(\frac{(1-\la)\pi}{k'}\right),
\end{equation}
 the second equality using \eqref{E-ext}. Hence, using $\sin x\le x$ for $x>0$ we obtain
\[
|s(\th)|\le 4\frac{\la(1-\la)\pi^2}{k'(n-k')}<1 \quad\text{ for }\quad k'(n-k')>\pi^2.
\]
This latter inequality holds for $n>\pi^2+1$, i.e., $n\ge 11$.

Now,  $[0,\pi]$ can be partitioned into $n-1$ intervals where, for each such interval,   $s(\th)$ is $0$ at both  endpoints, and of one sign in between. Since $s(\th)<0$ for small positive $\th$, the first such interval has a local minimum, and, for subsequent intervals,
maxima and minima alternate. 
Thus there are $\lfloor (n-1)/2\rfloor$ intervals 
 with maxima. If $s(\th)>1$ at such a maximum, then, by  Lemma \ref{L-2roots}, that interval will contain exactly two values of $\th$ for which $s(\th)=1$. If all such intervals have maximum $>1$, then $s(\th)=1$ will have $2\lfloor (n-1)/2\rfloor$ solutions on $[0,\pi]$, and so, because $n$ is odd, a total of $4\lfloor (n-1)/2\rfloor=2n-2$ solutions on $[0,2\pi]$. Thus $S(z)$ will be a Salem polynomial, its other two zeros being (say) $\tau>1$ and $1/\tau$.

 For $n\ge 11$ , however, $s(\th)$ has, from \eqref{E-s1}, maximum modulus $<1$ on at least one of the small subintervals above. In fact, because $s(\pi-\th)=-s(\th)$, there are at least two such intervals, on one of which $s(\th)$ has a maximum $<1$. Hence, $S(z)$ is not a Salem polynomial for $n\ge 11$.
 
 \item[(b)] The case $k$ odd. Then the function $s(\th)$ given by \eqref{E-sth}
 is even, of period $2\pi$. Its zeros in $[0,2\pi)$ are at 
 \[
\th=\frac{2\ell\pi}{2n-k}\quad(\ell=0\cc 2n-k-1)\,\,\text{ and } \th=\frac{2m\pi}{k}\quad(m=0\cc k-1),
 \]
 with $0$ being the only repeated zero. Thus $s(\th)$ has $2n-1$ distinct zeros on $[0,2\pi)$,
 and so, as in case (a), $[0,2\pi]$ can be divided into $2n-1$ subintervals, each having two consecutive zeros of $s(\th)$ as endpoints. On each subinterval, $s(\th)$ alternates between a minimum and a maximum, with minima on the first and last intervals.

 For the case $k=1$, we note that the second subinterval is $[2\pi/(2n-1),4\pi/(2n-1)]$,
 which has a local maximum. On this subinterval, $\th=2\pi(1+\la)/(2n-1)$, for $0\le\la\le 1$,
 so that here
 \[
s(\th)=-4\sin(\pi(1+\la))\sin(\pi(1+\la)/(2n-1)),
 \]
from which $|s(\th)|\le 8\pi/(2n-1)$, which is less than $1$ for $n\ge 14$.

 Now assume that $k\ge 3$. Then also $2n-k=n+(n-k)\ge 3$,  and we can use Lemma \ref{L-ext} to choose $\ell$ and $m$ such that 
 \be\label{E-ext3}
 \ell k-m(2n-k) =\eps=\pm 1,
 \ee
 where $\ell<(2n-k)/2$ and $m<k/2$. Thus $2\pi\ell/(2n-k)$ and $2\pi m/k$ (in some order) are the endpoints of a subinterval, call it $I$, of length $2\pi/(k(2n-k))$. In a similar manner to (a) we calculate that, at $\la 2\pi\ell/(2n-k)+(1-\la)2\pi m/k\in I$,
 \begin{align}
 |s(\th)|&=\left|4\sin\left(\pi\left(\ell-(1-\la)\frac{\eps}{k}\right)\right)\sin\left(\pi\left(m+\la\frac{\eps}{2n-k}\right)\right)\right| \label{E-bd1}\\
 &\le 4\frac{\la(1-\la)
 \pi^2}{k(2n-k)}\le \frac{\pi^2}{k(2n-k)}\le \frac{\pi^2}{3(2n-3)}.\label{E-bd2}
 \end{align}
 By counting the number of zeros of $s(\th)$ before the two that are the endpoints of $I$,
 we see that $\ell+m$ must be even for $I$ to have a maximum. 
 However, if $\ell+m$ is odd, we can replace $\ell$ and $m$ by $2\ell$ and $2m$. Not only do these have an even sum, but $4\pi\ell/(2n-k)$ and $4\pi m/k$ (in some order) are also consecutive zeros of $s(\th)$. This follows from the fact that $2\ell<k$ and $2m<2n-k$, combined with the fact that the interval, call it $I_2$, having these zeros as endpoints, is too short -- its length is $4\pi/(k(2n-k))$ --  to contain a  subinterval of length  $2\pi/(2n-k)$ or $2\pi/k$.

 When we replace $\ell,m$ by $2\ell,2m$ in \eqref{E-ext3} to give $2\ell k-2m(2n-k) =2\eps$, we see that we must replace $\eps$ in \eqref{E-bd1} by $2\eps$, and hence $\pi^2$ in \eqref{E-bd2} by $4\pi^2$ (in 3 places). Thus from \eqref{E-bd1},\eqref{E-bd2},  modified in this way, we see that $|s(\th)|<1$ on $I_2$ for $n\ge 9$. Thus all Salem polynomials with $k\ge 3$ and odd must have $n\le 8$.

 We have shown that $S(z)$ is not a Salem polynomial if $n\ge 14$. A routine search for $4\le n\le 13$ with $k<n$ and $\gcd(k,n)=1$ readily gives all Salem polynomials of shortness $5$, as shown in Table \ref{T-1}.
\end{itemize}

\section{Proof of Theorem \ref{T-shortness 6}: \texorpdfstring{\\}{\ } Short Salem polynomials in intervals}
\label{S:intervalsearch}

We have in mind the desire to prove Theorem \ref{T-shortness 6}, but for the moment we consider the more general question of finding short Salem numbers in intervals.

If $P(z)=a_0z^d + \cdots + a_d\in\mathbb{R}[x]$ has Mahler measure $M$, then Gon\c  calves' inequality \cite[Proposition 1.12]{McKeeSmyth2021} states that $M^2 + a_0^2a_d^2/M^2\le a_0^2 + \cdots + a_d^2$.
Suppose further that $P$ is a monic reciprocal or antireciprocal  polynomial of length $\ell\ge 5$.
Then the sum of the squares of its coefficients is at most $1^2+(\ell-2)^2+(\pm 1)^2=\ell^2-4\ell+6$. 
In particular, if $P$ is the short polynomial of a Salem number $\tau$ then we get that $\tau^2+1/\tau^2\le \ell^2-4\ell+6$ and hence a bound $b=b(\ell)$  such that $\tau\le b$. 
In the other direction, Dobrowolski's lower bound on the Mahler measure of $P$ as a function of its number of terms \cite{Dobrowolski1991, Dobrowolski2006} shows that there is some $a>1$ (again depending on $\ell$, but otherwise independent of $\tau$ and $P$) such that $\tau\ge a$. 
Thus $\tau$ will lie in the interval $[a,b]$.

We now consider how we might attempt to find all length-$\ell$ Salem polynomials in an interval $[a,b]$, with $1<a<b$.
Choosing $a$ and $b$ as above, this would give all length-$\ell$ Salem polynomials, but in practice we might restrict to a shorter interval.
There might be infinite families (those of Salem, or others), and we will mix theory with pragmatism in situations where we cannot with certainty give a complete description: what is the best that we can say after a reasonable amount of computation?

A key tool is the algorithm described in \cite[Section 1.7.1]{McKeeSmyth2021}, adapted to the search for reciprocal polynomials.
The technique we use is similar to that in \cite{El-SerafyMcKee2025}, but with the restriction to Salem polynomials allowing further efficiencies.
We are also able to cope with certain limit points in the interval, including all those that arise when the length is $6$: we can identify families approaching those limit points, and otherwise restrict to finitely many `sporadic' examples.
We are looking for Salem numbers in $[a,b]$ that are zeros of Salem polynomials of length $\ell\ge 6$ (the length-$5$ case having been settled in Theorem \ref{T-shortness 5}).
We write our length-$\ell$ Salem polynomial $P(z)$ as
\[
z^n + \eps_1 z^{n-g_1} + \eps_2 z^{n-g_1-g_2} + \cdots +\eps_{\ell-1}\,,
\]
where the first ``gap'' $g_1$ is strictly positive, but later gaps $g_i$ are only weakly positive; each $\eps_i$ is either $1$ or $-1$, and there is no cancellation of terms.
Since $P(z)=\pm P^*(z)$, if we know at least the first half of the $g_i$ and $\eps_i$, then we know the complete polynomial.

The bounds in \cite[Section 1.7.1]{McKeeSmyth2021}, given that our Salem number is at least $a$, give
\[
g_1 \le \lfloor \log(\ell - 1)/\log a \rfloor\,,
\]
\[
g_2 \le \lfloor (\log(\ell-2)-\log|a^{g_1}-1|)/\log a\rfloor\,,
\]
and more generally, 
\[
g_i\le\lfloor (\log(\ell - i)-\inf_{x\in[a,b]}\log|Q(x)|)/\log a\rfloor\,,
\]
where
\[
Q(z)=z^{g_1+\dots+g_{i-1}}+\eps_1z^{g_2+\dots+g_{i-1}}+\cdots + \eps_{i-1}\,.
\]
We can always bound $g_1$ and $g_2$ by this approach, but we may already hit trouble with $g_3$, if $Q(z)$ has a zero in the real interval $[a,b]$.

If $Q(z)$ has a zero in $[a,b]$, what can we do?
If $Q(z)$ has more than one zero outside the unit circle, then we can apply a Rouch\'e argument to bound the next gap, in a manner similar to that used in \cite{El-SerafyMcKee2025}: encircle the zeros of $Q$ outside the unit circle using small enough radii that the circles produced do not overlap, and do not meet the unit circle; compute rigorous lower bounds for $|Q|$ on all these circles; if the next gap is too large, then a Rouch\'e estimate shows that there will be at least two zeros outside the unit circle, which excludes all Salem polynomials.

The only situation where we are unable to bound the next gap is if $Q(z)$ has a unique zero outside the unit circle, lying in the interval $[a,b]$.
This troublesome zero is therefore either a Salem number or a Pisot number.
If $\ell$ is even and $Q(z)$ has length $\ell/2$, then we are saved: either we have identified one of Salem's families, or we have found an infinite sequence of Salem polynomials of the shape $Q(z)(z^n \pm 1)$ where $Q(z)$ is itself a Salem polynomial.

If $\ell=6$, this approach succeeds in identifying all Salem numbers in $[a,b]$ that are represented by length-$6$ Salem polynomials.
Together with Lemma \ref{L-trinomialPisot}, and taking $1<a<b$ as above so that all length-$6$ Salem numbers have their corresponding Salem number in the interval $[a,b]$, this establishes most of Theorem \ref{T-shortness 6}: there are $12$ infinite families of length-$6$ Salem polynomials, and otherwise finitely many length-$6$ Salem numbers.
The existence of a transition point $n_0=n_0(P)$ such that $P_n(z)$ in Theorem \ref{T-shortness 6} is a Salem polynomial if and only if $n\ge n_0$ follows from the computations in Section \ref{S:smallest Pisot limit}: \textit{a priori} one could imagine the possibility that a quadratic Pisot number appears after the first Salem number in the sequence, but for the families in Theorem \ref{T-shortness 6} this is not so.

If $\ell>6$, the situation is muddied if our interval $[a,b]$ contains any Salem or Pisot number of shortness below $\ell/2$.
We can record such obstacles and move on, but unless we find an \emph{ad hoc} way to deal with these afterwards, our search will be incomplete.
We may also hit practical issues if the finite search is unreasonably large.
In such cases we can impose bounds on the gaps to restore a practical search, but then need to record whenever those bounds are enforced, as if that happens then the search is incomplete.

To complete the proof of Theorem \ref{T-shortness 6}, we took $\ell=6$, $a=1.01$, and $b=\sqrt{10}$ (slightly larger than needed).
In fact we first took $a=1.17$ and within a few seconds computed the $126$ polynomials shown in Table \ref{Ta-Salem6}.
Then, as a background task over many weeks, we gradually pushed the lower bound down to $1.01$ (of course finding no new examples).
Note that Table \ref{Ta-Salem6} refers to `sporadic Salem polynomials'.
This simply excludes all those polynomials that appear in one of Salem's families.
On the other hand, Salem numbers in that table might \emph{also} be zeros of Salem polynomials that appear in one or more of the infinite families: these are indicated by a subscript $f$.
Those that are actually of shortness $5$ are given a subscript $5$.

\setlength{\LTcapwidth}{0.9\textwidth}
\newpage

\begin{longtable}[H]{|l|l|}
\caption{\\ All 126 known sporadic Salem polynomials of length 6. Here $[n,k,d,\eps]$ denotes the polynomial $z^n(z^k-z^{k-d}-1)+\eps(z^k+z^d-1)$.  A subscript $f$ indicates that the Salem number also appears in one or more of the twelve infinite families; a subscript $5$ indicates that the Salem number has shortness $5$.} \label{Ta-Salem6}\\   \hline
Salem number &  \qquad\qquad Corresponding polynomials of length 6     \\ \hline \hline
\endfirsthead
\caption{(continued)}\\ \hline
 Salem number &  \qquad\qquad Corresponding Salem polynomials of length 6     \\ \hline \hline
\endhead
\endfoot
\endlastfoot
$1.176280818_{f,5}$ & $\vphantom{3^{3^3}} [25, 11, 1, 1], [22, 13, 1, -1], [12, 5, 2, 1], 
[20, 7, 2, 1], [14, 11, 2, -1], $\\
 & $\vphantom{3^{3^3}}[14, 4, 3, 1], [19, 5, 3, 1], [18, 7, 3, -1], [11, 10, 3, -1], [13, 7, 4, -1],$ \\
 & $\vphantom{3^{3^3}}[13, 6, 5, -1], [11, 7, 5, -1], [10, 7, 6, -1]$ \\ \hline
 1.188368148 & $\vphantom{3^{3^3}}[23, 10, 1, 1], [20, 12, 1, -1], [13, 10, 2, -1], 
  [13, 6, 4, -1]$ \\ \hline
$1.200026524_5$ & $\vphantom{3^{3^3}}[20, 9, 1, 1], [29, 10, 1, -1], [19, 11, 1, -1], 
  [11, 10, 2, -1], [17, 4, 3, 1],$ \\
  & $\vphantom{3^{3^3}}[23, 5, 3, -1], [10, 8, 3, -1], [11, 6, 4, -1]$ \\ \hline
 $1.202616744_5$ & $\vphantom{3^{3^3}} [21, 9, 1, 1], [18, 11, 1, -1], [14, 5, 2, 1], 
    [19, 7, 2, -1], [12, 9, 2, -1],$ \\
    & $\vphantom{3^{3^3}}[9, 7, 4, -1]$ \\ \hline
$1.216391661_{f,5}$ &  $\vphantom{3^{3^3}}[18, 8, 1, 1], [26, 9, 1, -1], [17, 10, 1, -1], 
  [16, 5, 2, 1], [10, 9, 2, -1], $ \\
 &   $\vphantom{3^{3^3}} [24, 4, 3, 1], [15, 5, 3, -1],  [11, 5, 4, -1], [9, 6, 4, -1], [8, 6, 5, -1] $ \\ \hline
 1.219720859 &  $\vphantom{3^{3^3}}[19, 8, 1, 1], [16, 10, 1, -1], [11, 8, 2, -1], 
   [31, 4, 3, 1] $ \\ \hline
 $1.230391434_{f,5}$ & $\vphantom{3^{3^3}} [15, 7, 1, 1], [28, 8, 1, 1], [21, 5, 2, 1], 
   [19, 4, 3, -1], [12, 5, 3, -1], $ \\
   & $\vphantom{3^{3^3}}[8, 7, 3, -1], [7, 6, 5, -1]$ \\ \hline
  1.232613549 &  $\vphantom{3^{3^3}}[33, 8, 1, -1], [16, 9, 1, -1], [23, 5, 2, 1]$ \\ \hline
                  1.235664580 & $\vphantom{3^{3^3}} [30, 5, 2, 1]$ \\ \hline
 $1.236317932_5$ & $\vphantom{3^{3^3}} [16, 7, 1, 1], [23, 8, 1, -1], [15, 9, 1, -1], 
   [37, 5, 2, 1], [9, 8, 2, -1]$ \\ \hline
                 1.237504821 & $\vphantom{3^{3^3}}[29, 5, 2, -1] $ \\ \hline
1.240726424 & $\vphantom{3^{3^3}} [12, 6, 1, 1], [17, 7, 1, 1], [14, 9, 1, -1], 
  [22, 5, 2, -1], [10, 7, 2, -1], $ \\
   & $\vphantom{3^{3^3}} [15, 4, 3, -1], [7, 6, 4, -1] $ \\ \hline
          1.252775937 &  $\vphantom{3^{3^3}} [13, 6, 1, 1], [24, 7, 1, 1] $ \\ \hline
           1.253330650 &  $\vphantom{3^{3^3}} [25, 7, 1, 1], [15, 5, 2, -1]$ \\ \hline
           1.255093517 & $\vphantom{3^{3^3}} [33, 7, 1, 1], [12, 4, 3, -1]$ \\ \hline
          1.256221154 & $\vphantom{3^{3^3}} [29, 7, 1, -1], [14, 8, 1, -1]$ \\ \hline
                  1.260103540 & $\vphantom{3^{3^3}} [21, 7, 1, -1]$ \\ \hline
 $1.261230961_{f,5}$ & $\vphantom{3^{3^3}} [14, 6, 1, 1], [20, 7, 1, -1], [13, 8, 1, -1], 
    [13, 5, 2, -1], [8, 7, 2, -1], $ \\
     & $\vphantom{3^{3^3}} [7, 5, 4, -1] $ \\ \hline
  1.267296443 & $\vphantom{3^{3^3}} [15, 6, 1, 1], [12, 8, 1, -1], [9, 6, 2, -1]$ \\ \hline
  $1.280638156_{f,5}$ & $\vphantom{3^{3^3}} [20, 6, 1, 1], [13, 7, 1, -1], [10, 5, 2, -1], 
   [9, 4, 3, -1], [7, 5, 3, -1], $ \\
    & $\vphantom{3^{3^3}} [6, 5, 4, -1]$ \\ \hline
                  1.281691372 & $\vphantom{3^{3^3}} [21, 6, 1, 1]$ \\ \hline
                  1.282495561 & $\vphantom{3^{3^3}} [22, 6, 1, 1]$ \\ \hline
                  1.284616551 & $\vphantom{3^{3^3}} [28, 6, 1, 1]$ \\ \hline
                  1.284746822 & $\vphantom{3^{3^3}}  [29, 6, 1, 1]$ \\ \hline
                  1.285099364 & $\vphantom{3^{3^3}}  [35, 6, 1, 1]$ \\ \hline
                   1.285121520 & $\vphantom{3^{3^3}}  [36, 6, 1, 1]$ \\ \hline
                  1.285185671 & $\vphantom{3^{3^3}}  [43, 6, 1, 1]$ \\ \hline
           1.285196727 & $\vphantom{3^{3^3}}  [50, 6, 1, 1]$
           \\ \hline 
           1.285199179 & $\vphantom{3^{3^3}}[61, 6, 1, -1]$ \\ \hline
                  1.285235436 & $\vphantom{3^{3^3}}  [39, 6, 1, -1]$ \\ \hline
                 1.285409065 & $\vphantom{3^{3^3}}  [32, 6, 1, -1]$ \\ \hline
         1.286395967 & $\vphantom{3^{3^3}}  [25, 6, 1, -1], [12, 7, 1, -1]$ \\ \hline
                 1.286730182 & $\vphantom{3^{3^3}}  [24, 6, 1, -1]$ \\ \hline
                 1.291741426 & $\vphantom{3^{3^3}}  [18, 6, 1, -1]$ \\ \hline
 $1.293485953_{f,5}$ & $\vphantom{3^{3^3}}  [17, 6, 1, -1], [11, 7, 1, -1], [7, 6, 2, -1], 
    [8, 4, 3, -1]$ \\ \hline
                1.295675372 & $\vphantom{3^{3^3}}  [16, 6, 1, -1]$ \\ \hline
          $1.302268805_f$ & $\vphantom{3^{3^3}}  [10, 7, 1, -1], [8, 5, 2, -1]$ \\ \hline
                 $1.318197504_f$ & $\vphantom{3^{3^3}}  [11, 6, 1, -1]$ \\ \hline
                  $1.326633115_f$ & $\vphantom{3^{3^3}}  [10, 6, 1, -1]$ \\ \hline
          $1.337313210_{f,5}$ & $\vphantom{3^{3^3}}  [9, 6, 1, -1], [6, 5, 2, -1]$ \\ \hline
          $1.350980338_f$ & $\vphantom{3^{3^3}}  [8, 6, 1, -1], [7, 4, 2, -1]$ \\ \hline
                  $1.359999712_f$ & $\vphantom{3^{3^3}}  [5, 4, 3, -1]$ \\ \hline
          $1.401268368_{f,5}$ & $\vphantom{3^{3^3}}  [5, 4, 2, -1], [4, 4, 3, -1]$ \\ \hline
                  $1.556030191_f$ & $ \vphantom{3^{3^3}}  z^5(z^2-2) -2z^2+1 =[5,2,2,-1] $ \\ \hline               $1.722083806_{f,5}$ & $ \vphantom{3^{3^3}} z^3(z^2-2) -2z^2+1 = [3,2,2,-1]$
\\ \hline
\end{longtable}

\section{The minimal polynomials of Salem numbers in Salem's length-6 families}\label{S:smallest Pisot limit}

In this section, working through one of the most interesting examples, we indicate how the various finiteness assertions in Proposition \ref{T-Salemupdated} can be made explicit.
Then in Table \ref{T-Harriett} we summarise the corresponding data for all the infinite families of length-6 Salem polynomials.

Taking $P(z)=z^3-z-1$ and $\eps=-1$ in Proposition \ref{T-Salemupdated}, we define $P_n$ and $\tau_n$ accordingly.
The value of $r$ in the statement of the theorem is $7$, and we then check that $n_0$ in part (v) is $8$.
We therefore know that $P_n(x)$ has, for $n\ge 8$ exactly one root outside the unit circle, so that the non-cyclotomic part of $P_n(x)$ is irreducible. 
Thus to find the degree of this non-cyclotomic part we just need to find all the cyclotomic factors of $P_n(z)$, for all $n>7$, and subtract their degrees from the degree of $P_n(z)$.

To do this, we first need to define 
\[
\delta_{j,k}(n):=\begin{cases} 1 &\text{ if $n\equiv j\pmod{k}$};\\
0 &\text{ otherwise}.\end{cases}
\]
 We can then define the function $F(n)$ by
\begin{equation}\label{E-deg}
F(n):=\delta_{1,{2}}(n)+2\delta_{1,{3}}(n)+4\delta_{2,{5}}(n)+4\delta_{3,{8}}(n)+4\delta_{4,{12}}(n)\\
+6\delta_{5,{18}}(n)+8
\delta_{6,{30}}(n).
\end{equation}
Note that the function $F(n)$ is periodic of period equal to the least common multiple 
\[
\lcm(2,3,5,8,12,18,30)=360.
\]
Its maximum value is $15$, attained for $n=347$. 

We claim the following.

\begin{prop} \label{P-min}The minimal polynomial $S_n(x)$ of $\tau_n$ for $n\ge 8$ is
given by
\begin{equation}\label{E-min}
S_n(z)=\frac{P_n(z)}{
\Phi_1(z)\Phi_2(z)^{\delta_{1,2}}\Phi_3(z)^{\delta_{1,3}}\Phi_5(z)^{\delta_{2,5}}\Phi_8(z)^{\delta_{3,8}}\Phi_{12}(z)^{\delta_{4,12}}\Phi_{18}(z)^{\delta_{5,18}}\Phi_{30}(z)^{\delta_{6,30}}},
\end{equation}
and has degree $f_n:=n+2-F(n)$.
\end{prop}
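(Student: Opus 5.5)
Here $P_n(z)=z^n(z^3-z-1)-(z^3+z^2-1)$, of degree $n+3$. The plan is to determine exactly which cyclotomic polynomials divide $P_n$, and with what multiplicity, for $n\ge 8$.

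By Proposition \ref{T-Salemupdated}(v), for $n\ge n_0=8$ the polynomial $P_n$ has a unique zero $\tau_n>1$, and $P_n$ is the minimal polynomial of $\tau_n$ times a cyclotomic factor. Hence it is enough to show two things. First, the cyclotomic part of $P_n$ is exactly the denominator in \eqref{E-min}. Second, all the cyclotomic factors in that denominator are simple. The degree count then follows at once: the denominator has degree
\[
1+\delta_{1,2}+2\delta_{1,3}+4\delta_{2,5}+4\delta_{3,8}+4\delta_{4,12}+6\delta_{5,18}+8\delta_{6,30}=1+F(n),
\]
so $\deg S_n=(n+3)-1-F(n)=n+2-F(n)$.

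\textbf{Step 1: finding the candidate $d$.} I would follow the proof of Proposition \ref{T-Salemupdated}(viii). By Lemma \ref{L-Smythcyclotomictrick}, if $\Phi_d\mid P_n$ then $\omega_d$ is also a zero of $P_n(-z)$, $P_n(z^2)$ or $P_n(-z^2)$, depending on $d\bmod 4$. Eliminating $z^n$ in each case gives $\Phi_d$ dividing one of the following fixed polynomials:
\[
Q(z)P(-z)\pm Q(-z)P(z),\qquad Q(z)^2P(z^2)-Q(z^2)P(z)^2,\qquad Q(z)^2P(-z^2)\pm Q(-z^2)P(z)^2.
\]
Here $Q=P^*$, and the $\pm$ sign depends on the parity of $n$. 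Each of these is a nonzero explicit polynomial of degree at most $9$. I would factor them and record which $\Phi_d$ occur, keeping only those $d$ with the correct residue modulo $4$ for the relevant case. I expect this list to be $\{1,2,3,5,8,12,18,30\}$ together with possibly some spurious values.

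\textbf{Step 2: the residue classes.} For each candidate $d$, I would evaluate $P_n(\omega_d)=\omega_d^nP(\omega_d)-Q(\omega_d)$ for $n=0,\dots,d-1$. Equivalently, this asks whether $\omega_d^n=Q(\omega_d)/P(\omega_d)$, which happens for at most one residue class of $n$ modulo $d$. The computation should give the following:
\begin{itemize}
\item $d=1$: every $n$, since $P_n(1)=0$ always because $\eps=-1$;
\item $d=2$: $n\equiv1\pmod 2$;
\item $d=3$: $n\equiv1\pmod 3$;
\item $d=5$: $n\equiv 2\pmod 5$;
\item $d=8$: $n\equiv 3\pmod 8$;
\item $d=12$: $n\equiv 4\pmod{12}$;
\item $d=18$: $n\equiv 5\pmod{18}$;
\item $d=30$: $n\equiv 6\pmod{30}$.
\end{itemize}
Any spurious $d$ from Step 1 should give no class at all. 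By Proposition \ref{T-Salemupdated}(viii), one representative in each class suffices. This is a finite check in a cyclotomic field.

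\textbf{Step 3: multiplicities (the main obstacle).} I must rule out repeated cyclotomic factors for every $n\ge8$, not just for large $n$. At $z=1$, we have $P_n'(1)=P'(1)+nP(1)-Q'(1)=2-n-5=-n-3\ne0$, so $\Phi_1$ is simple.

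For $d>1$, a double zero $\omega_d$ would satisfy
\[
Q(z)(zP'(z)+nP(z))-zP(z)Q'(z)=0,
\]
as in the proof of Proposition \ref{T-Salemupdated}(vii). This forces $n=z(Q'P-QP')(z)/(PQ)(z)$ evaluated at $\omega_d$. So I would compute this value at each $\omega_d$ in the list and check that it is never a positive integer in the required class. Alternatively, I would cite Chinburg's Lemma 2.1 as quoted after Proposition \ref{T-Salemupdated}(vii): for $\eps=-1$, repeated factors can occur only when $n=\deg P-2P'(1)/P(1)=3+4=7<8$. This disposes of the question immediately, and I would use it as the primary route.

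Combining Steps 1–3 with Proposition \ref{T-Salemupdated}(v) gives \eqref{E-min} and the degree formula $f_n=n+2-F(n)$.
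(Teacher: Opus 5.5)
Your plan is the paper's proof. Eliminating $z^n$ after Lemma~\ref{L-Smythcyclotomictrick} is exactly the paper's computation of $\Res_y$ of $T_3(x,y)$ against $T_3(-x,\pm y)$, $T_3(-x^2,\pm y^2)$ and $T_3(x^2,y^2)$; the residue classes come from one representative $n_d$ together with Theorem~\ref{T3}(ii); and simplicity for $n\ge 8$ comes from Chinburg's Lemma 2.1.

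There is one sign slip to fix. Since $P^*(z)=1-z^2-z^3$, the correct polynomial is $P_n(z)=z^n(z^3-z-1)+z^3+z^2-1$. Your displayed formula is the $\eps=+1$ family, which contradicts your own $P_n(1)=0$ and would give the wrong residue classes if computed with literally. With the correct sign, $P_n'(1)=7-n$ rather than $-n-3$; this is still nonzero for $n\ge 8$. Also, the eliminants have degree up to $12$, not $9$.
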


One can readily compute that $\tau_n$ has degree $>50$ for all $n\ge 60$, with $\tau_{59}$ having degree $50$.

In the proof, we make use of the resultant of two nonzero polynomials $A,B \in R[z]$, where $R$ is an integral domain. The resultant $\Res_z(A,B)\in R$, the result of eliminating $z$ from $A$ and $B$, is $0$ if and only if $A$ and $B$ have a common nonconstant factor in $R[z]$.

\begin{proof} Let $\phi(n)$ be the degree of the $n$th cyclotomic polynomial $\Phi_n(z)$. We immediately see that $P_n(z)$ is divisible by $\Phi_1(z)=z-1$ for all $n$.
The idea of the proof is to show that any polynomial $P_n(z)$ can be divisible by $\Phi_k(z)$ only for $k=1,2,3,5,8,12,18$ or $30$, and that, to be precise, 
\begin{itemize}
\item $\Phi_2(z)$\,\, for $n\equiv 1\pmod{2}$;
\item $\Phi_3(z)$\,\, for $n\equiv 1\pmod{3}$;
\item $\Phi_5(z)$\,\, for $n\equiv 2\pmod{5}$;
\item $\Phi_8(z)$\,\, for $n\equiv 3\pmod{8}$;
\item $\Phi_{12}(z)$ for $n\equiv 4\pmod{12}$;
\item $\Phi_{18}(z)$ for $n\equiv 5\pmod{18}$;
\item $\Phi_{30}(z)$ for $n\equiv 6\pmod{30}$.
\end{itemize}
Since for $n\ge 8$ the polynomial $P_n(z)/(z-1)$ has no repeated roots (\cite{Boyd1977}, \cite[Lemma 2.1]{Chinburg1984}),
has degree $n+2$,  and $\Phi_k(z)$ has degree $\phi(k)$, the result will follow.
 We now give the details. Our method was used earlier in \cite{Smyth2000}.
 Define $T_3(x,y):=y(x^3-x-1)+x^3+x^2-1$, so that $P_n(x)=T_3(x,x^n)$. Put $y=x^n$. Then the map 
\begin{itemize} 
\item
$x \mapsto -x$ takes 
\[
\begin{cases} y \mapsto -y &\text{ for $n$ odd};\\
y \mapsto y &\text{ for $n$ even};\end{cases} 
\]
\item $x \mapsto -x^2$ takes 
\[
\begin{cases} y \mapsto -y^2 &\text{ for $n$ odd};\\
y \mapsto y^2 &\text{ for $n$ even};\end{cases}
\]
\item $x \mapsto x^2$ takes $y\mapsto y^2$.
\end{itemize}

By Lemma \ref{L-Smythcyclotomictrick}, every cyclotomic factor of $P_n(z)$ must also be a factor of  one of $P_n(-z)$, $P_n(-z^2)$ or $P_n(z^2)$.  In terms of $T_3(x,y)$ this implies that the only possible such cyclotomic factors are the cyclotomic factors of 
\begin{align*}
&\Res_y(T_3(x,y),T_3(-x,-y)) &&= \Phi_1(x)\Phi_2(x)\Phi_8(x) 
&& \text{ ($n$ odd)};\\
&\Res_y(T_3(x,y),T_3(-x,y))&&=x\Phi_{12}(x) 
&& \text{ ($n$ even)};\\
&\Res_y(T_3(x,y),T_3(-x^2,-y^2))&&=x\Phi_1(x)\Phi_2(x)^3\Phi_{18}(x)
&& \text{ ($n$ odd)};\\
&\Res_y(T_3(x,y),T_3(-x^2,y^2))&&= \Phi_{12}(x)\Phi_{30}(x)
&& \text{ ($n$ even)};\\
&\Res_y(T_3(x,y),T_3(x^2,y^2))&&=\Phi_1(x)^3\Phi_2(x)\Phi_3(x)^2\Phi_5(x).\, &&
\end{align*}
Hence we see that the only possible cyclotomic factors of $P_n(z)$ are
$\Phi_1(z)=z-1$, $\Phi_2(z)= z+1$, $\Phi_3(z)= z^2+z+1$, $\Phi_5(z)= z^4 + z^3 + z^2 + z + 1$, $\Phi_8(z)= z^4+1$, $\Phi_{12}(z)= z^4 - z^2 + 1$, $\Phi_{18}(z)= z^6 - z^3 + 1$ and $\Phi_{30}(z)= z^8 + z^7 - z^5 - z^4 - z^3 + z + 1$. 
Now for $k=2,3,5,8,12,18,30$ we easily check that the least values $n_k$  for which $\Phi_k(z)\mid P_{n_k}(z)$ are $1,1,2,3,4,5,6$, respectively. 
Hence, by Theorem \ref{T3}(ii), $\Phi_k(z)\mid P_{n}(z)$ precisely for $n\equiv n_k\pmod{k}$, as claimed at the start of the proof.

Finally, since for these values of $k$ the cyclotomic polynomials $\Phi_k(z)$ have degrees $1,2,4,4,4,6,6$ respectively, and because for $n\ge 8$ the polynomial $P_n(z)$ has no repeated roots, we obtain the formula for $S_n(z)$ as stated in  \eqref{E-min}, as well as the formula \eqref{E-deg} for $F(n)$. Then since the numerator of $S_n(z)$ has degree $n+3$ and its denominator has degree $1+F(n)$, we obtain $f_n=n+2-F(n)$ as claimed.

\end{proof}

Similar computations can be done for all of the twelve infinite families of length-$6$ Salem polynomials.\footnote{These computations were done by Harriett Du Four as part of an undergraduate project at Royal Holloway, University of London.}
Writing the general length-$6$ Salem polynomial as in Theorem 3.1 (but allowing the possibility $P(z)=z^5-z^4-1=(z^2-z+1)(z^3-z-1)$) in the shape $P_n(z) = z^nP(z) + \varepsilon P^*(z)$, where $\varepsilon\in\{-1,1\}$ and $P(z)$ is one of the six trinomials listed in Lemma \ref{L-trinomialPisot}, it was found that 
the only appearances of reciprocal quadratic Pisot polynomials as factors of $P_n$ were for $\varepsilon = 1$ and $(P,n)\in\{(z-2,1),(z-2,2),(z^2-z-1,1)\}$.
Hence for each choice of $P$ and $\varepsilon$ there is some $n_0$ such that $P_n$ is a Salem polynomial if and only if $n\ge n_0$.
In Table \ref{T-Salem families}, we show the value of $n_0$, and also indicate all the cyclotomic factors of any of the $P_n$.
The case $P(z)=z^5-z^4-1$ is excluded, as it mirrors $P(z)=z^3-z-1$ with an extra cyclotomic factor of $z^2-z+1$ for all $n$.

\begin{table}\label{T-Harriett}
\caption{Cyclotomic factors of the length-$6$ Salem polynomials that arise from Salem's construction.  The $n$th polynomial in the family is $z^nP(z) + \varepsilon P^*(z)$, and this polynomial is Salem for $n\ge n_0$.}\label{T-Salem families}
\begin{tabular}{|r|r|c|l|} \hline
  $P$   & $\varepsilon$ &  $n_0$ & list of pairs $(a,d)$ such that $\Phi_d \mid P_n \Leftrightarrow n\equiv a \pmod{d}$\\ \hline\hline
    $z-2$ & $1$ & 3 & (0,2), (5,6) \\
    $z-2$ & $-1$ & 4 & (0,1), (1,2), (2,6) \\ \hline
    $z^2 - z - 1$ & $1$ & 2 & (1,2), (5,6), (9,12) \\
    $z^2 - z - 1$ & $-1$ & 5 & (0,1), (0,2), (2,6), (1,3), (3,12) \\ \hline
    $z^3-z-1$ & $1$ & 1 & (0,2), (7,8), (10,12), (14,18), (21,30) \\
    $z^3-z-1$ & $-1$ & 8 & (0,1), (1,2), (1,3), (2,5), (3,8), (4,12), (5,18), (6,30) \\ \hline
    $z^3-z^2-1$ & $1$ & 1 & (0,2), (3, 4), (5, 6), (8,10), (13,18) \\
    $z^3-z^2-1$ & $-1$ & 6 & (0,1), (1,2), (2,3), (1,4), (2,6), (3,10), (4,18) \\ \hline
    $z^4-z^3-1$ & $1$ & 1 & (1,2), (0,4), (5,6), (11,14), (17,24) \\
    $z^4-z^3-1$ & $-1$ & 7 & (0,1), (0,2), (2,4), (1,5), (2,6), (3,9), (4,14), (5,24) \\ \hline
\end{tabular}
\end{table}

\section{Short polynomials of Salem numbers less than the smallest Pisot number}

In the table below we summarise what is currently known about small, short Salem numbers. The provenance of this data is as follows. The smallest known Salem number $\lambda_0=\tau_8$ was found by Lehmer \cite{Lehmer1933}. 
 The ascending sequence $\tau_n\,(n=8,9,10\cdots)$, where $\tau_n$ is the real root $>1$ of $(z^3-z-1)z^{n}+z^3+z^2-1=0$, $\{\tau_n\}$ of Salem numbers tending to the smallest Pisot number $\theta_0$ is constructed by Salem's method \cite[Theorem IV]{Salem1945}, so was presumably known to him. 
 The first table of $39$ small Salem numbers was compiled by Boyd \cite{Boyd1977}, who also found four more soon afterwards \cite{Boyd1978}. Mossinghoff \cite{Mossinghoff1998} found a further four.
These 47 Salem numbers, all less than $1.3$, are collected in Mossinghoff's table \cite{MossinghoffList} (five of them are in the sequence $\{\tau_n\}$). 
Recently Sac-\'{E}p\'{e}e \cite{Sac-Epee2025} extended the search for Salem numbers, finding 11 more Salem numbers between $1.3$ and $49/37>\theta_0-0.0004$ that are not members of the sequence $\{\tau_n\}$. Thus to the best of our current knowledge, the Salem numbers less than $\theta_0$ consist 
of the sequence $\{\tau_n\}$ and $53$ `sporadic' ones $\sigma_1,\dots,\sigma_{53}$.

In our table we present the defining polynomials of the Salem numbers in `short' form, rather than by giving their minimal polynomials. 
Where more than one such short polynomial exists, we give the one of least degree. For those Salem numbers less than $1.3$, El-Serafy and McKee \cite{El-SerafyMcKee2025} found these polynomials; as part of the current project we extended this computation to  Sac-\'{E}p\'{e}e's $11$ Salem numbers.
We observed that, while the minimal polynomial of $\sigma_{48}$, of shortness $13$, in fact divides a length-$12$ reciprocal polynomial, namely $z^{80}-z^{79}-z^{77}+z^{74}+z^{63}+z^{57}-z^{23}-z^{17}-z^6+z^3+z-1$, the quotient is not cyclotomic.
For $n\ge 18$, the generic polynomial shown for $\tau_n$ is indeed the unique short polynomial for $\tau_n$.
Any other short polynomial would either be one of the sporadic polynomials in Table \ref{Ta-Salem6}, or would be in one of the twelve infinite families: using part (vi) of Proposition \ref{T-Salemupdated} we can quickly identify any such examples.
We find that $\tau_{13}$ and $\tau_{17}$ have sporadic short polynomials, that $\tau_{16}$ has a lower-degree short polynomial coming from one of the other infinite families, and there are no such anomalous $\tau_n$ for $n\ge 18$.

The  minimal polynomials of these Salem numbers, and hence also their degrees, are readily calculated by Corollary \ref{C-cycfactor}. For the infinite family $\{\tau_n\}$, the
minimal polynomials are given by \eqref{E-min} and their degrees by $f_n$ in Proposition \ref{P-min}.

\setlength{\LTcapwidth}{\textwidth}
\newpage
\begin{longtable}[H]{|l|c|c|c|r|}
\caption{Known Salem numbers less than the smallest Pisot number $\theta_0=1.3247\cdots$}\label{Ta-AllSalem}\\  
\rot{0}{1em}{Label} &
\rot{0}{2em}{\,\,Salem number} &
\rot{30}{1em}{Degree} &
\rot{30}{1em}{Shortness} &
\rot{0}{2em}{\qquad\qquad\qquad\qquad Short polynomial of minimal degree}     \\ \hline
\endfirsthead
\caption{(continued)}\\ 
\rot{0}{1em}{Label} &
\rot{0}{2em}{\,\,Salem number} &
\rot{30}{1em}{Degree} &
\rot{30}{1em}{Shortness} &
\rot{0}{2em}{\qquad\qquad\qquad\qquad Short polynomial of minimal degree}     \\ \hline
\endhead
\endfoot
\endlastfoot
$\quad\tau_8$   & \,\,\phantom{0}\vphantom{$3^{3^3}$}1.176280818 \phantom{0} & 10 & 5 & $z^{12}-z^7-z^6-z^5+1$ \\ \hline
$\sigma_1$ & \vphantom{$3^{3^3}$}1.188368148 & 18 &  6 & $ z^{19} - z^{15} - z^{13} - z^{6} - z^{4} + 1 $  \\ \hline
$\sigma_2$ & \vphantom{$3^{3^3}$}1.200026524 & 14 & 5 & $z^{20}-z^{19}-z^{10}-z+1$   \\ \hline
$\sigma_{3}$ & \vphantom{$3^{3^3}$}1.202616744 & 14 & 5 &  $z^{14}-z^{12}-z^7-z^2+1$ \\ \hline
$\sigma_{4}$ & \vphantom{$3^{3^3}$}1.216391661 & 10 & 5& $z^{10}-z^6-z^5-z^4+1$ \\ \hline
$\sigma_{5}$ & \vphantom{$3^{3^3}$}1.219720859 & 18 &   6 & $ z^{19} - z^{17} - z^{11} - z^{8} - z^{2} + 1 $ \\ \hline
\quad$\tau_9$ & 1.230391434 &  10 & 5 & $z^{10}-z^7-z^5-z^3+1$ \\ \hline
$\sigma_{ 6}$ & \vphantom{$3^{3^3}$}1.232613549 & 20 &   6 & $ z^{25} - z^{24} - z^{16} - z^{9} - z + 1 $  \\ \hline
$\sigma_{ 7}$ & \vphantom{$3^{3^3}$}1.235664580 & 22 &   6 & $ z^{35} - z^{33} - z^{30} + z^{5} + z^{2} - 1 $  \\ \hline
$\sigma_{8}$ & \vphantom{$3^{3^3}$}1.236317932 & 16 & 5 & $z^{16}-z^{15}-z^8-z+1$ \\ \hline
$\sigma_{ 9}$ & \vphantom{$3^{3^3}$}1.237504821 & 26 &   6 & $ z^{34} - z^{32} - z^{29} - z^{5} - z^{2} + 1 $  \\ \hline
$\sigma_{ 10}$ & \vphantom{$3^{3^3}$}1.240726424 & 12 &   6 & $ z^{13} - z^{9} - z^{7} - z^{6} - z^{4} +1 $   \\ \hline
$\sigma_{ 11}$ & \vphantom{$3^{3^3}$}1.252775937 & 18 &   6 & $ z^{19} - z^{18} - z^{13} + z^{6} + z - 1 $  \\ \hline
$\sigma_{ 12}$ & \vphantom{$3^{3^3}$}1.253330650 & 20 &   6 & $ z^{20} - z^{18} - z^{15} - z^{5} - z^{2} + 1 $  \\ \hline
$\sigma_{ 13}$ & \vphantom{$3^{3^3}$}1.255093517 & 14 &   6 & $ z^{16} - z^{13} - z^{12} - z^{4} - z^{3} + 1 $  \\ \hline
$\sigma_{14 }$ & \vphantom{$3^{3^3}$}1.256221154 & 18 &    6 & $ z^{22} - z^{21} - z^{14} - z^{8} - z + 1 $  \\ \hline
$\sigma_{ 15}$ & \vphantom{$3^{3^3}$}1.260103540 & 24 &   6 & $ z^{28} - z^{27} - z^{21} - z^{7} - z + 1 $  \\ \hline
$\sigma_{16 }$ & \vphantom{$3^{3^3}$}1.260284237   & 22 & 7 & $z^{28}-z^{27}-z^{20}-z^{14}-z^8-z+1$ \\ \hline
\quad$\tau_{10}$ & \vphantom{$3^{3^3}$}1.261230961 & 10 & 5 &$z^{10}-z^8-z^5-z^2+1$ \\ \hline
$\sigma_{ 17}$ & \vphantom{$3^{3^3}$}1.263038140  & 26 & 7 & $z^{26} - z^{25} - z^{20} + z^{13} - z^6 - z + 1$ \\ \hline
$\sigma_{18 }$ & \vphantom{$3^{3^3}$}1.267296443 & 14 &   6 & $ z^{15} - z^{13} - z^{9} - z^{6} - z^{2} + 1 $  \\ \hline
$\sigma_{  19}$ & \vphantom{$3^{3^3}$}1.276779674   & 22 & 8 & $z^{28} - z^{27} - z^{20} - z^{15} - z^{13} - z^8 - z + 1$ \\ \hline
\quad$\tau_{11}$  & \vphantom{$3^{3^3}$}1.280638156 & 8 &  5 & $z^8-z^5-z^4-z^3+1$  \\ \hline
$\sigma_{ 20 }$ & \vphantom{$3^{3^3}$}1.281691372 & 26 &   6 & $ z^{27} - z^{26} - z^{21} + z^{6} + z - 1 $  \\ \hline
$\sigma_{ 21 }$ & \vphantom{$3^{3^3}$}1.282495561 & 20 &   6 & $ z^{28} - z^{27} - z^{22} + z^{6} + z - 1 $  \\ \hline
$\sigma_{ 22 }$ & \vphantom{$3^{3^3}$}1.284616551 & 18 &   6 & $ z^{34} - z^{33} - z^{28} + z^{6} + z - 1 $  \\ \hline
$\sigma_{ 23 }$ & \vphantom{$3^{3^3}$}1.284746822 & 26 &   6 & $ z^{35} - z^{34} - z^{29} + z^{6} + z - 1 $  \\ \hline
$\sigma_{ 24 }$ & \vphantom{$3^{3^3}$}1.285099364 & 30 &   6 & $ z^{41} - z^{40} - z^{35} + z^{6} + z - 1 $  \\ \hline
$\sigma_{25 }$ & \vphantom{$3^{3^3}$}1.285121520 & 30 &   6 & $ z^{42} - z^{41} - z^{36} + z^{6} + z - 1 $  \\ \hline
$\sigma_{ 26 }$ & \vphantom{$3^{3^3}$}1.285185671 & 30 &   6 & $ z^{49} - z^{48} - z^{43} + z^{6} + z - 1 $  \\ \hline
$\sigma_{27 }$ & \vphantom{$3^{3^3}$}1.285196727 & 26 &   6 & $ z^{56} - z^{55} - z^{50} + z^{6} + z - 1 $  \\ \hline
$\sigma_{ 28 }$ & \vphantom{$3^{3^3}$}1.285199179 & 44 &   6 & $ z^{67} - z^{66} - z^{61} - z^{6} - z + 1 $  \\ \hline
$\sigma_{ 29}$ & \vphantom{$3^{3^3}$}1.285235436 & 30 &   6 & $ z^{45} - z^{44} - z^{39} - z^{6} - z + 1 $  \\ \hline
$\sigma_{ 30 }$ & \,\phantom{0}\vphantom{$3^{3^3}$}1.285409065\phantom{0} & 34 &   6 & $ z^{38} - z^{37} - z^{32} - z^{6} - z + 1 $  \\ \hline
$\sigma_{ 31 }$ & \vphantom{$3^{3^3}$}1.286395967 & 18 &   6 & $ z^{19} - z^{18} - z^{12} - z^{7} - z + 1 $  \\ \hline
$\sigma_{ 32 }$ & \vphantom{$3^{3^3}$}1.286730182 & 26 &   6 & $ z^{30} - z^{29} - z^{24} - z^{6} - z + 1 $  \\ \hline
$\sigma_{ 33 }$ & \vphantom{$3^{3^3}$}1.291741426 & 24 &   6 & $ z^{24} - z^{23} - z^{18} - z^{6} - z + 1 $\\ \hline
$\sigma_{ 34 }$ & \vphantom{$3^{3^3}$}1.292039106     & 20 & 7 & $z^{24} - z^{23} - z^{17} - z^{12} - z^7 - z  + 1$ \\ \hline
$\sigma_{35 }$ & \vphantom{$3^{3^3}$}1.292418658   & 40 & 10 & $z^{42} - z^{40} - z^{39} + z^{29} + z^{28} - z^{14} - z^{13} + z^3 + z^2 - 1$  \\ \hline 
$\sigma_{ 36 }$ & \vphantom{$3^{3^3}$}1.292900722   & 46 & 10 & $z^{47} - z^{46} - z^{43} + z^{39} + z^{26} - z^{21} - z^8 + z^4 + z - 1$ \\ \hline
\quad$\tau_{12}$ & \vphantom{$3^{3^3}$}1.293485953 & 10 &  5 & $z^{12}-z^{11}-z^6-z+1$ \\ \hline
$\sigma_{ 37 }$ & \vphantom{$3^{3^3}$}1.295675372 & 18 &   6 & $ z^{22} - z^{21} - z^{16} - z^{6} - z + 1 $ \\  \hline
$\sigma_{ 38 }$ & \vphantom{$3^{3^3}$}1.296210660   & 34 & 12 & $z^{38} - z^{36} - z^{34} - z^{29} + z^{28} - z^{24} - z^{14} + z^{10} - z^9 - z^4 - z^2 + 1$ \\ \hline
$\sigma_{ 39 }$ & \vphantom{$3^{3^3}$}1.296421365  & 22 & 7 & $z^{22}-z^{21} - z^{17} + z^{11} - z^5 - z + 1$ \\ \hline
$\sigma_{ 40 }$ & \vphantom{$3^{3^3}$}1.296821374   & 28 & 8 & $z^{37} - z^{36} - z^{33} + z^{29} - z^8 + z^4 + z - 1$ \\ \hline
$\sigma_{ 41 }$ & \vphantom{$3^{3^3}$}1.298429835   & 36 & 10 & $z^{39} - z^{38} - z^{35} + z^{31} - z^{20} + z^{19} - z^8 + z^4 + z - 1$ \\ \hline
$\sigma_{ 42 }$ & \vphantom{$3^{3^3}$}1.299744869  & 26 & 10 & $z^{34} - z^{33} - z^{27} - z^{22} - z^{18} - z^{16} - z^{12} - z^7 - z +1$ \\  \hline
\quad$ \tau_{13} $  &  \vphantom{$3^{3^3}$}1.302268805  & 12 & 6 &   $z^{13}-z^{11}-z^{8} - z^5-z^2+1$ \\ \hline
$\sigma_{43} $ & $ 1.302721444$   & 32 & 10 & $\vphantom{2^{2^{2^2}}}z^{40} -z^{39} -z^{36} +z^{32} -z^{22} +z^{18} -z^8 +z^4 +z - 1$ \\ \hline
$\sigma_{44} $ & $1.303283349$ & 32 & 8 & $\vphantom{2^{2^{2^2}}}z^{37} -z^{35} -z^{34} +z^{25} -z^{12} +z^3 +z^2 - 1$ \\ \hline
$\sigma_{45} $ & $1.303385419$ & 30 & 10 & $\vphantom{2^{2^{2^2}}}z^{34} -z^{33} -z^{29} +z^{23} -z^{18} -z^{16} +z^{11} -z^5 -z + 1$ \\  \hline
$\sigma_{46} $ & $1.304697625$ & 26 & 9 & $\vphantom{2^{2^{2^2}}}z^{26} -z^{24} -z^{23} +z^{15} -z^{13} +z^{11} -z^3 -z^2 + 1$ \\ \hline
$\sigma_{47} $ & $1.305131379$ & 22 & 9 & $\vphantom{2^{2^{2^2}}}z^{22} -z^{21} -z^{18} +z^{15} -z^{11} +z^7 -z^4 -z + 1$ \\ \hline
$\sigma_{48} $ & \vphantom{$3^{3^3}$}$1.306473538$ & 38 & 13 & $\vphantom{2^{2^{2^2}}}z^{44}\! -\!z^{43}\! -\!z^{37}\! -\!z^{33}\! -\!z^{30} -z^{24} +z^{22} -z^{20} -z^{14} -z^{11} -z^7 -z + 1$ \\ \hline
$\sigma_{49} $ & $1.308071086 $ & 44 & 10 & $\vphantom{2^{2^{2^2}}} z^{50} -z^{48} -z^{47} +z^{35} +z^{34} -z^{16} -z^{15} +z^3 +z^2 - 1$ \\ \hline
\quad$ \tau_{14} $ & \vphantom{$3^{3^3}$}1.308409006 & 16 &   6 &  $z^{17}-z^{15}-z^{14} + z^3+z^2-1$ \\ \hline
$\sigma_{50} $ & $1.308966300 $ & 36 & 10 & $\vphantom{2^{2^{2^2}}}z^{42} -z^{40} -z^{39} +z^{27} +z^{26} -z^{16} -z^{15} +z^3 +z^2 - 1$ \\ \hline
$\sigma_{51} $ & $1.310180863 $ & 22 & 7 & $\vphantom{2^{2^{2^2}}}z^{26} -z^{24} -z^{23} +z^{13} -z^3 -z^2 + 1$ \\ \hline
$\sigma_{52} $ & $1.312566633 $ & 22 & 8 & $\vphantom{2^{2^{2^2}}}z^{42} -z^{40} -z^{39} +z^{28} +z^{14} -z^3 -z^2 + 1$ \\ \hline
\quad$ \tau_{15} $ & \vphantom{$3^{3^3}$}1.312773240  & 16 &  6 &   $z^{18}-z^{16}-z^{15} + z^3+z^2-1$ \\ \hline
\quad$ \tau_{16} $ &  \vphantom{$3^{3^3}$}1.315914432 & 12 &  6 &   $z^{13}-z^{12}-z^{9} + z^4+z-1$ \\ \hline
$\sigma_{53} $ & $1.316069253 $ & 40 & 10 & $\vphantom{2^{2^{2^2}}}z^{48} -z^{46} -z^{45} +z^{31} +z^{30} -z^{18} -z^{17} +z^3 +z^2 - 1$ \\ \hline
\quad$ \tau_{17} $ & \vphantom{$3^{3^3}$}1.318197504 & 14 & 6 & $z^{17}-z^{16}-z^{11}-z^6-z+1$ \\ \hline
\multicolumn{2}{|l|}{ \quad$\tau_{n}$ $\text{ for }n\ge 18$ } & $f_n$   &  6 & \vphantom{$3^{3^3}$}$z^n(z^3-z-1) +z^3+z^2-1$
\\ \hline
\end{longtable}

\end{document}